\def\A{\mathcal{A}}
\def\a{\alpha}
\def\b{\beta}
\def\g{\gamma}
\def\AG{\langle A,G\rangle}
\def\D{\mathcal{D}}
\def\Hom{\operatorname{Hom}}
\def\Im{\operatorname{Im}}
\def\P{\mathcal{P}}
\def\S{\Sigma}
\def\eps{\varepsilon}
\def\sig{\sigma}
\def\s{\operatorname{sign}}
\newtheorem{thm}{Theorem}[section]
\newtheorem{thm*}{Theorem}
\newtheorem*{rem*}{Remark}
\newtheorem{lem}[thm]{Lemma}
\newtheorem{cor}[thm]{Corollary}
\newtheorem{defn}[thm]{Definition}
\newtheorem{rem}[thm]{Remark}
\newtheorem{ex}[thm]{Example}
\newcommand\ris[5]{\raisebox{#1mm}{\hspace{#2mm}\includegraphics[width=#3mm]{#4.eps}\hspace{#5mm}}}
\begin{document}

\title{Invariants of closed braids via counting surfaces}

\author{Michael Brandenbursky}

\vspace{2cm}

\begin{abstract}
A Gauss diagram is a simple, combinatorial way to present a
link. It is known that any Vassiliev invariant may be obtained
from a Gauss diagram formula that involves counting subdiagrams of certain combinatorial types.
In this paper we present simple formulas for an infinite family of invariants in terms
of counting surfaces of a certain genus and number of boundary components in a Gauss diagram associated with a closed braid.
We then identify the resulting invariants with partial derivatives
of the HOMFLY-PT polynomial.
\end{abstract}

\maketitle

\section{Introduction.}

In this paper we consider link invariants arising from the  HOMFLY-PT polynomials. The HOMFLY-PT polynomial $P(L)$
is an invariant of an oriented link $L$ (see e.g. \cite{FYHLMO}, \cite{J}, \cite{LM}, \cite{PT}). It is a Laurent polynomial in two
variables $a$ and $z$, which satisfies the following skein relation:
\begin{equation*}
aP\left(\ris{-4}{-1}{10}{L+}{-1.1}\right)-
a^{-1}P\left(\ris{-4}{-1}{10}{L-}{-1.1}\right)=
zP\left(\ris{-4}{-1.1}{10}{L0}{-1.1}\right).
\end{equation*}
The HOMFLY-PT polynomial is normalized in the following way.
If $O_r$ is an $r$-component unlink, then
$P(O_r)=\left(\frac{a-a^{-1}}{z}\right)^{r-1}$. The Conway
polynomial $\nabla$ may be defined as $\nabla(L):=P(L)|_{a=1}$.
This polynomial is a renormalized version of the Alexander
polynomial (see e.g. \cite{Conway}, \cite{Likorish}). All coefficients
of $\nabla$ are finite type or Vassiliev invariants.

One of the mainstream and simplest techniques for producing
Vassiliev invariants are so-called Gauss diagram formulas (see
\cite{GPV}, \cite{PV}). These formulas generalize the calculation
of a linking number by counting subdiagrams of special
geometric-combinatorial types with signs and weights in a given
link diagram.

Until recently, explicit formulas of this type were known only for
few invariants of low degrees. The situation has changed with
works of Chmutov-Khoury-Rossi \cite{CKR} and Chmutov-Polyak
\cite{CP}. In \cite{CKR} Chmutov-Kho\-ury-Rossi presented an
infinite family of Gauss diagram formulas for all coefficients of
$\nabla(L)$, where $L$ is a knot or a two-component
link. Each formula for the coefficient $c_n$ of $z^n$ is related to a certain count of orientable
surfaces of a certain genus, and with one boundary component. The
genus depends only on $n$ and the number of the components of $L$.

In a recent paper \cite{B} the author showed that the $n$-th coefficient of the polynomial $zP^{(1)}_{a}(L)|_{a=1}$, where $P^{(k)}_{a}(L)|_{a=1}$ is the $k$-th partial derivative of the HOMFLY-PT polynomial $P$ w.r.t. the variable $a$ evaluated at $a=1$, can be obtained by a certain count of orientable surfaces of some genus with \textbf{one} and \textbf{two} boundary components. And again the
genus depends only on $n$ and the number of the components of $L$.

This leads to a natural question: how to produce link invariants
by counting orientable surfaces with an arbitrary number of
boundary components? In this paper we are going to show that the $n$-th coefficient of the polynomial $z^kP^{(k)}_{a}(L)|_{a=1}$ can be obtained by  a similar count of orientable surfaces of a certain genus with \textbf{one} up to $\textbf{k+1}$ boundary components, see Theorem \ref{thm:main}.

\textbf{Plan of the paper.} In Section 2 we review Gauss diagrams and Gauss diagram formulas. We define a notion of multi-based arrow diagrams and formulate our main result in terms of Gauss diagrams. In Section 3 we show that our invariant satisfies certain skein relation. In Section 4 we give a proof of our main Theorem \ref{thm:main}, and give an example. Section 5 is used for final remarks.

\textbf{Acknowledgments.} The author would like to thank Michael Polyak and Hao Wu for helpful conversations. We would like to thank the anonymous referee for careful reading of our paper and for his/her helpful comments and remarks. Part of this work has been done during the author's stay in Mathematisches Forschungsinstitut Oberwolfach. The author wishes to express his gratitude to the institute. He was supported by the Oberwolfach Leibniz fellowship.


\section{Gauss diagrams and arrow diagrams}
\label{sec-Conway}
In this section we recall a notion of Gauss diagrams, arrow diagrams and Gauss diagram formulas. We then define a special type of arrow diagrams which will be used to define Gauss diagram formulas for coefficients of polynomials derived from the HOMFLY-PT polynomial.

\subsection{Gauss diagrams of links}
\label{subsec-coloring}

Gauss diagrams (see e.g.  \cite{GPV}, \cite{PV}) provide a simple combinatorial way to encode oriented links.

\begin{defn}\rm
Given a link diagram $D$, consider a collection of oriented circles parameterizing it. Unite two preimages of every crossing of $D$ in a pair and connect them by an arrow, pointing from the overpassing preimage to the underpassing one. To each arrow we assign a sign (local writhe) of the corresponding crossing. The result is called the \textit{Gauss diagram} $G$ corresponding to $D$.
\end{defn}

We consider Gauss diagrams up to an orientation-preserving diffeomorphisms of the circles. In figures we will always draw circles of the Gauss diagram with a counter-clockwise orientation. A classical link can be uniquely reconstructed from the corresponding Gauss diagram \cite{GPV}.

\begin{ex}\rm
Diagrams of the trefoil knot and the Hopf link, together with the corresponding Gauss diagrams, are shown in the following picture.
\begin{equation*}
\quad\ris{-8}{-3}{40}{trefoil+Gauss}{-1.1}\quad\quad\quad\ris{-6.5}{-3}{74}{Hopf+Gauss}{-1.1}
\end{equation*}
\end{ex}

Two Gauss diagrams represent isotopic links  if and only if they are related by a finite number of Reidemeister moves for Gauss diagrams shown in Figure \ref{fig:Reidem}, where $\eps=\pm1$. See e.g. \cite{CDBook, Oestlund, P}.
\begin{figure}[htb]
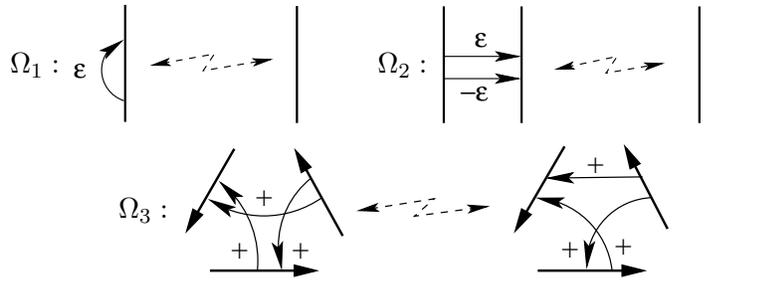

\begin{equation*}
\Omega_{1}:\quad\ris{-7}{-3}{30}{reidemeister1}{-1.1}\quad\quad\quad\Omega_{2}:\quad\ris{-7}{-3}{45}{reidemeister2}{-1.1}
\end{equation*}
\begin{equation*}
\Omega_{3}:\quad\ris{-8}{-3}{65}{reidemeister3}{-1.1}
\end{equation*}
\caption{\label{fig:Reidem} Reidemeister moves of Gauss diagrams.}
\end{figure}

\subsection{Gauss diagrams of closed braids}

Recall that the Artin braid group $B_m$ on $m$ strings has the following presentation:
\begin{equation*}
B_m=\langle\sig_1,\ldots,\sig_{m-1}|\hspace{2mm} \sig_i\sig_j=\sig_j\sig_i,\hspace{1mm}|i-j|\geq2;\hspace{2mm}\sig_i\sig_{i+1}\sig_i=\sig_{i+1}\sig_i\sig_{i+1}\rangle,
\end{equation*}
where each generator $\sig_i$ is shown in Figure \ref{fig:braid-gen-sig-i}a.
\begin{figure}[htb]
\centerline{\includegraphics[height=1.3in]{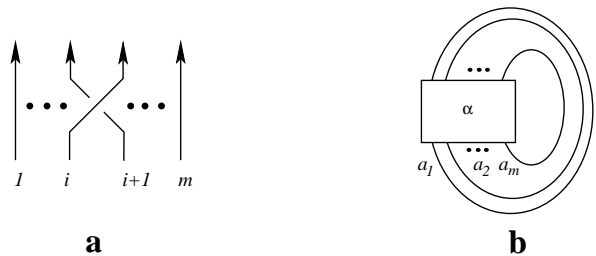}}
\caption{\label{fig:braid-gen-sig-i} Artin generator $\sig_i$ and a closure of a braid $\a$.}
\end{figure}
Let $w$ be a word on $m$ strings in generators $\sig_1^{\pm 1},\ldots,\sig_{m-1}^{\pm 1}$. We take the corresponding "geometric word", connect it opposite ends by nonintersecting curves as shown in Figure \ref{fig:braid-gen-sig-i}b and get an oriented link diagram $D$. Let us remove from $D$ a small neighborhood around each intersection point. We define an arc in $D$ to be a connected component of the resulting graph, and label the arcs of $D$ shown in Figure \ref{fig:braid-gen-sig-i}b by letters $a_1,...,a_m$. Two words $w$ and $w'$ on $m$ strings represent the same element in $B_m$ up to conjugation, if and only if the associated diagrams $D$ and $D'$ can be obtained one from another by a finite sequence of moves shown in Figure \ref{fig:braid-Rmoves}, see e.g. \cite{CT}. Note that the moves shown in Figure \ref{fig:braid-Rmoves} may also involve labeled arcs.

\begin{figure}[htb]
\centerline{\includegraphics[height=2.15in]{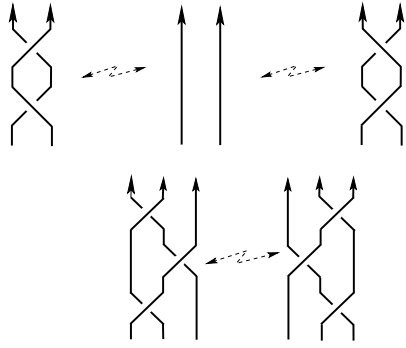}}
\caption{\label{fig:braid-Rmoves} Reidemeister moves of braid diagrams.}
\end{figure}

\begin{figure}[htb]
\begin{equation*}
\Omega_1:\hspace{18mm}\ris{-7}{-3}{85}{B-R1-move}{-1.1}
\end{equation*}
\begin{equation*}
\Omega_2:\hspace{10mm}\ris{-35}{-3}{80}{B-R2-move}{-1.1}
\end{equation*}
\begin{equation*}
\Omega_3:\hspace{20mm}\ris{-8}{-3}{70}{reidemeister3}{-1.1}
\end{equation*}
\caption{\label{fig:braid-Gauss-Rmoves} Reidemeister moves of braid Gauss diagrams. The move $\Omega_2$ may involve arcs labeled by $a_i$ and $a_j$. In this case we require that $|i-j|=1$. The move $\Omega_3$ may involve labeled arcs as well, i.e. two corresponding arcs may be labeled by some $a_i$.}
\end{figure}

Let $D$ be any diagram associated with a braid $\a\in B_m$. A corresponding \emph{braid Gauss diagram} $G_m$ is a Gauss diagram $G$ together with the corresponding arcs labeled by letters $a_1,...,a_m$, where an arc in $G$ is a connected component of the complement of all arrows in $G$. Similarly to the case of Gauss diagrams, two braid Gauss diagrams represent isotopic links if and only if they are related by a finite number of moves shown in Figure~\ref{fig:braid-Gauss-Rmoves}.

\begin{defn}\label{defn:labeling}\rm
Let $k\geq 1$ be any integer and $G_m$ a braid Gauss diagram associated with a braid $\a\in B_m$. A \emph{colored braid Gauss diagram} $G_{k,m}$ is a diagram $G$ together with the following assignment of $k$ base points $*_1,...,*_k$ and $m-k$ natural numbers between $1$ and $k$ to the arcs labeled by letters $a_1,...,a_m$:
\begin{itemize}
\item
For each $1\leq i\leq k$ there exists \textbf{exactly one} arc $a_j$ such that the base point $*_i$ is placed on this arc, and the arc $a_1$ always contains basepoint $*_1$.
\item
Let $1\leq i_1<i_2\leq k$. If $*_{i_1}$ and $*_{i_2}$ are placed on arcs $a_{j_1}$ and $a_{j_2}$, then $j_1<j_2$, i.e. the assignment of base points is in ascending order.
\item
After we placed base points $*_1,...,*_k$ on arcs $a_{j_1},...,a_{j_k}$, let $a_j$ be a non-based arc. Denote by $j_l$ the maximal number from the set $\{j_1,...,j_k\}$ such that $j>j_l$. Now, to arc $a_j$ we assign \textbf{exactly one} number from the set $\{1,...,l\}$.
\end{itemize}
\end{defn}

Let us give an example of a colored braid Gauss diagram, in case when $k=3$, since the above definition seems to be complicated. In Figure \ref{fig:braid-Gauss-example} we show a braid diagram for $\sig_1\sig_2\sig_3\in B_4$, the corresponding braid Gauss diagram and an associated colored braid Gauss diagram $G_{3,4}$.

\begin{figure}[htb]
\centerline{\includegraphics[height=2.5in]{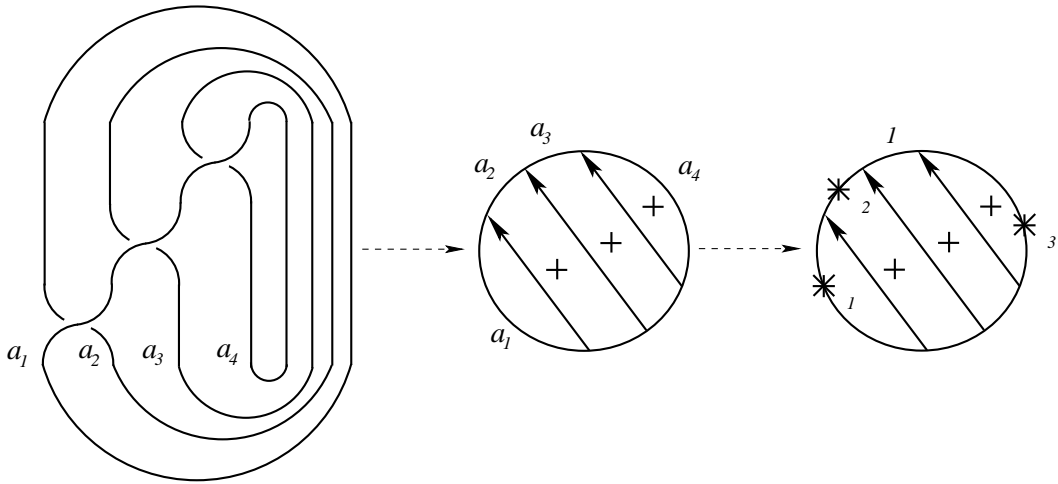}}
\caption{\label{fig:braid-Gauss-example} Braid diagram on the left, the corresponding braid Gauss diagram in the middle and an associated colored braid Gauss diagram $G_{3,4}$ on the right. A base point $*_1$ is placed on arc labeled by $a_1$, a base point $*_2$ is placed on arc labeled by $a_2$, a base point $*_3$ is placed on arc labeled by $a_4$, and so the assignment of the base points is in ascending order. Now we have to place a number from  $\{1,2,3\}$ on arc labeled by $a_3$. Since in this case $j_1=1$, $j_2=2$ and $j_3=4$, we can place any number from $\{1,2\}$ on arc labeled by $a_3$. In this picture we placed number 1.}
\end{figure}

We denote by $\mathfrak{G}_{k,m}$ a set of all colored braid Gauss diagrams associated with $k$ and $G_m$. Note that $\mathfrak{G}_{k,m}$ is empty whenever $k>m$.

\subsection{Arrow diagrams and the corresponding surfaces}

An \textit{arrow diagram} is a modification of a notion of a Gauss
diagram, i.e. it consists of a number of oriented circles with several arrows connecting pairs of distinct points on them, see Figure \ref{fig:non-realizable}. An arrow diagram is \emph{based} if a base point is placed on an arc of $A$. We consider these diagrams up to orientation-preserving diffeomorphisms of the circles.
\begin{figure}[htb]
\centerline{\includegraphics[height=0.65in]{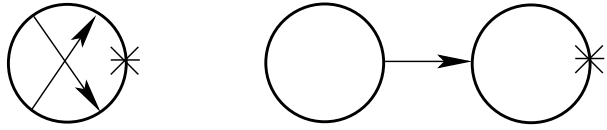}}
\caption{\label{fig:non-realizable}Based connected arrow diagrams.}
\end{figure}

Given an arrow diagram $A$, we define an oriented surface $\S(A)$ as
follows. Firstly, replace each circle of $A$ with an oriented disk
bounding this circle. Secondly, glue $1$-handles to boundaries of
these disks using each arrow as a core of an untwisted ribbon, such that the ribbons do not intersect in $\mathbb{R}^3$. See Figures
\ref{fig:surface} and \ref{fig:surface-const}.

\begin{figure}[htb]
\centerline{\includegraphics[height=0.65in]{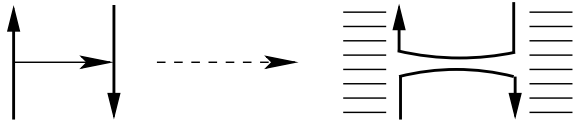}}
\caption{\label{fig:surface} Constructing a surface from an arrow diagram.}
\end{figure}

\begin{figure}[htb]
\centerline{\includegraphics[height=1.6in]{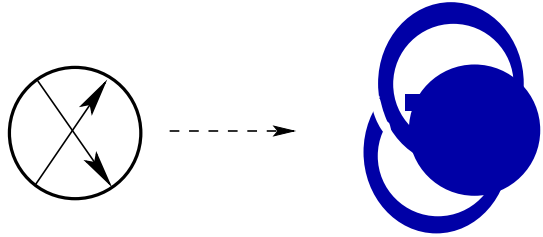}}
\caption{\label{fig:surface-const} Arrow diagram with 2 arrows and a corresponding surface, which is homeomorphic to the torus with one boundary component.}
\end{figure}

\begin{defn}\rm
By the \textit{genus} and the \textit{number of boundary components}
of an arrow diagram $A$ we mean the genus and the number of boundary
components of $\S(A)$. An arrow in $A$ is called \textit{separating}, if the boundary of the corresponding ribbon belongs to different boundary components of $\S(A)$.
\end{defn}

\begin{rem}\rm
Let $A$ be an arrow diagram with $n$ arrows and $r$ circles. Then the Euler characteristic $\chi$ of $\S(A)$ equals to $\chi(\S(A))=r-n$. If $A$ is connected, $n\geq r-1$. If $A$ has odd number of boundary components, $n\neq r (\rm{mod}2)$, otherwise $n=r (\rm{mod}2)$.
\end{rem}

\begin{ex}\rm
The arrow diagram with one circle in Figure \ref{fig:non-realizable} is of genus
one, while the other arrow diagram in the same figure is
of genus zero. Both of them have one boundary component.
\end{ex}

\begin{defn}\rm
An arrow diagram $A$ with $k$ boundary components is \emph{multibased} if $k$ base points $*_1,...,*_k$ are placed on $k$ different arcs of $A$, such that each arc belongs to a different boundary component of $A$, see Figure \ref{fig:multibased}. Let $1\leq j\leq k$. We say that a boundary component of $A$ is called $j$-th boundary component if there exists an arc, which belongs to this component, with a base point $*_j$ on it.
\end{defn}

\begin{figure}[htb]
\centerline{\includegraphics[height=0.65in]{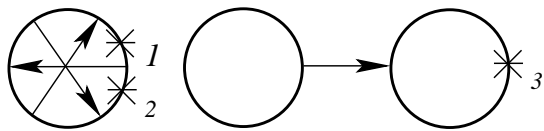}}
\caption{\label{fig:multibased} Multibased arrow diagram with 3 boundary components.}
\end{figure}

\subsection{Gauss diagram formulas}
\label{subsec-G-diag}

M. Polyak and O. Viro suggested \cite{PV} the following approach to
compute link invariants using Gauss diagrams.

\begin{defn}\label{defn:Arrow-Gauss-states}\rm
Let $A$ be a based arrow diagram with $r$ circles and let $G$ be a based
Gauss diagram of an $r$-component oriented link. A \textit{homomorphism}
$\phi:A\rightarrow G$ is an orientation preserving homeomorphism
between each circle of $A$ and each circle of $G$, which maps the base
point of $A$ to the base point of $G$ and induces an injective map
of arrows of $A$ to the arrows of $G$. The set of arrows in
$\Im(\phi)$ is called a \textit{state} of $G$ induced by $\phi$ and is
denoted by $S(\phi)$. The \textit{sign} of $\phi$ is defined as
$\s(\phi)=\prod_{\a\in S(\phi)}sign(\a)$. A set of all homomorphisms
$\phi:A\to G$ is denoted by $\Hom(A,G)$.
\end{defn}

Note that since the circles of $A$ are mapped to circles of $G$,
a state $S$ of $G$ determines both the arrow diagram $A$ and the
map $\phi:A\to G$ with $S=S(\phi)$.

\begin{defn}
\rm A \textit{pairing} between an  arrow diagram $A$ and $G$ is defined by
$$\AG=\sum_{\phi\in\Hom(A,G)}\s(\phi).$$
We set $\AG=0$, whenever $A$ and $G$ have different number of circles.
\end{defn}

For an arbitrary arrow diagram $A$ the pairing $\AG$ does not represent a link invariant, i.e. it depends on the choice of a Gauss diagram of a link. However, for some special linear combinations of arrow diagrams the result is independent of the choice of $G$. Using a slightly modified definition of arrow diagrams Goussarov, Polyak and Viro showed in \cite{GPV} that each real-valued Vassiliev invariant of knots may be obtained this way.
In other words, they showed that each real-valued Vassiliev invariant of knots may be computed as a certain count with weights of subdiagrams of a based Gauss diagram. For example, all coefficients of the Conway polynomial $\nabla$ may be obtained using suitable combinations of arrow diagrams. More precisely, in \cite{CKR} it was shown that the coefficient $c_n$ of $z_n$ in $\nabla$ can be obtained by a certain count of arrow diagrams with \textbf{one} boundary component and certain genus, where the genus depends only on $n$ and the number of circles in $G$.

In \cite{B} we showed that the $n$-th coefficient of the polynomial $zP^{(1)}_{a}|_{a=1}$ can be obtained by a certain count of arrow diagrams with \textbf{one} and \textbf{two} boundary components and certain genus. And again the genus depends only on $n$ and the number of circles in $G$. In what follows we are going to show that, in case when $G$ is a braid Gauss diagram of a link, the $n$-th coefficient of the polynomial $z^kP^{(k)}_{a}|_{a=1}$ can be obtained by  a similar count of arrow diagrams with \textbf{one} up to $\textbf{k+1}$ boundary components and a certain genus. Hence we need to adopt Definition \ref{defn:Arrow-Gauss-states} to the case of multi-based arrow diagrams and colored braid Gauss diagrams.

\begin{defn}\label{defn:MultiArrow-BraidGauss}\rm
Let $A$ be a multi-based arrow diagram with $r$ circles and $k$ boundary components and let $G_{k,m}$ be a colored braid
Gauss diagram of an $r$-component closed braid on $m$ strings. A \textit{homomorphism}
$\phi:A\rightarrow G_{k,m}$ is an orientation preserving homeomorphism
between each circle of $A$ and each circle of $G_{k,m}$, which maps each base
point $*_i$ of $A$ to each base point $*_i$ of $G_{k,m}$ and induces an injective map
of arrows of $A$ to the arrows of $G_{k,m}$. In addition we require that if a non-based arc $a$ of $G_{k,m}$ is labeled by some $j$, then $a$ is an image of some arc which lies in the $j$-th boundary component of $A$. The notion of state and pairing is defined as before.
\end{defn}

\subsection{Descending arrow diagrams}
In this subsection we define a special type of multi-based arrow diagrams.

\begin{defn}\rm
Let $A$ be a multi-based arrow diagram with $k$ boundary components. As we go
along the first boundary component of $\S(A)$ starting from the base point $*_1$, we pass
on the boundary of each ribbon once or twice. Then we continue to go along the second boundary component of $\S(A)$ starting from the base point $*_2$ and so on until we pass all boundary components of $\S(A)$. Arrow diagram $A$ is \textit{descending}
if we pass each ribbon of $\S(A)$ first time in the direction of its core arrow.
\end{defn}

\begin{rem} \rm
In order to define the notion of descending arrow diagrams we used the fact that all arrow diagrams are multi-based. The position of base points in an arrow diagram is essential to define an order of the passage.
\end{rem}

From now on we will work only with multi-based arrow diagrams.

\begin{ex}\rm
Arrow diagram with two boundary components in Figure \ref{fig:arrow_diagrams}a is descending and arrow diagram with three boundary components in Figure \ref{fig:arrow_diagrams}b is not descending.
\end{ex}

\begin{figure}[htb]
\centerline{\includegraphics[height=0.9in]{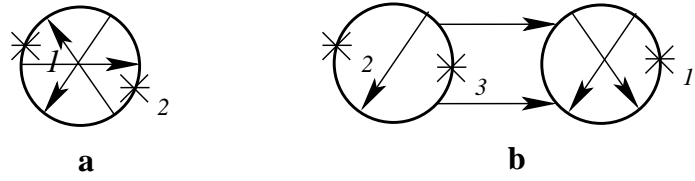}}
\caption{\label{fig:arrow_diagrams} Descending and non-descending arrow diagrams.}
\end{figure}

Denote by $\D_{n,k}$ the set of all
descending arrow diagrams with $n$ arrows and $k$ boundary components.
\begin{ex}\rm
The set $\D_{2,1}$ is presented below.
$$\D_{2,1}:=\quad\ris{-4}{-3}{80}{D21}{-1.1}$$
\end{ex}

Let $G_m$ be any braid Gauss diagram. We denote by $w(G_m)$ the writhe of $G_m$, i.e. the sum of signs of all arrows in $G_m$. For each pair of natural numbers $j,k$ denote by
$$f_j^{(k)}(G_m):=\left(a^{-m-w(G_m)+1}(a^2-1)^{j-1}\right)^{(k-1)}|_{a=1}.$$
Let $G\in\mathfrak{G}_{k,m}$. A state $S(\phi)$ corresponding to $\phi:A\to G$ for a descending diagram $A$ with $k$ boundary
components will be also called \textit{descending}.

\begin{defn}\label{defn:P-k} \rm
For a pair $k,j$ such that $1\leq j\leq k$ set
$$D_{n,k,j}(G_m):=\sum_{G\in \mathfrak{G}_{j,m}}\sum_{A\in \D_{n+j-k,j}}\AG$$
and denote by
$$D_{n,k}(G_m):=\sum_{j=1}^k f_j^{(k)}(G_m)D_{n,k,j}(G_m).$$
Define the following polynomial:
$$P_k(G_m):=\sum_{n=0}^\infty D_{n,k}(G_m) z^n.$$
\end{defn}

\begin{rem}\label{rem:1-bd}\rm
Let $G_m$ be any braid Gauss diagram of a link $L$. Note that if $k=1$, then $f_1^{(1)}(G_m)=1$ and $D_{n,1}(G_m)=D_{n,1,1}(G_m)$ is exactly the sum with signs of all descending arrow diagrams with one base point and with one boundary component inside $G_m$. It follows by Theorem of Chmutov-Khoury-Rossi \cite{CKR} that $D_{n,1}(G_m)$ is the $n$-th coefficient $c_n(L)$ of the Conway polynomial $\nabla(L)$. Hence $P_1(G_m)$ is nothing but $\nabla(L)$.
\end{rem}

\section{Skein relation}

In this section we show that each $D_{n,k,j}(G_m)$ satisfies Conway skein relation for each $n$ and $1\leq j\leq k$. The fact that $D_{n,1,1}(G_m)$ satisfies Conway skein relation was proved in \cite{B,CKR}, i.e.

\begin{figure}[htb]
\centerline{\includegraphics[height=0.7in]{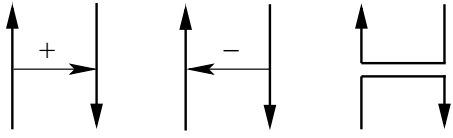}}
\caption{\label{fig:Conway} A Conway triple of Gauss diagrams.}
\end{figure}

\begin{thm}\cite{B,CKR}\label{thm:1-comp-skein-relation}
Let $G_{m,+}$, $G_{m,-}$ and $G_{m,0}$ be braid Gauss diagrams which differ
only in the fragment shown in Figure \ref{fig:Conway}. Then
\begin{equation}
P_1(G_{m,+})-P_1(G_{m,-})=zP_1(G_{m,0}).
\end{equation}
\end{thm}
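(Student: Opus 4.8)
The plan is to reduce the claim to a local, combinatorial computation on the fragment where the three diagrams $G_{m,+}$, $G_{m,-}$, $G_{m,0}$ differ. Since $P_1(G_m)=\sum_n D_{n,1,1}(G_m)z^n$ counts, with signs, the based descending arrow diagrams with one boundary component inside $G_m$, it suffices to show the coefficient-wise identity $D_{n,1,1}(G_{m,+})-D_{n,1,1}(G_{m,-})=D_{n-1,1,1}(G_{m,0})$ for every $n$. This is precisely the statement proved in \cite{B,CKR}, so strictly speaking the theorem is cited rather than reproved; but the natural self-contained argument runs as follows. The fragment in Figure \ref{fig:Conway} involves two strands of the braid passing through; in $G_{m,+}$ and $G_{m,-}$ there is one extra arrow (of sign $+1$ and $-1$ respectively) joining the two arcs in that fragment, while in $G_{m,0}$ that arrow is absent and the two arcs are merged (the smoothing). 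First I would partition $\Hom(A,G_{m,+})$ according to whether the distinguished crossing of the fragment lies in the state $S(\phi)$ or not.

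Second, the homomorphisms whose state does \emph{not} use the distinguished arrow are in obvious sign-preserving bijection between $G_{m,+}$ and $G_{m,-}$, because away from that arrow the two Gauss diagrams are identical; hence these contributions cancel in the difference $D_{n,1,1}(G_{m,+})-D_{n,1,1}(G_{m,-})$. What survives is the sum over homomorphisms whose state \emph{does} contain the distinguished arrow. For such a $\phi$ the sign splits off a factor $\s$ of the distinguished arrow, which is $+1$ for $G_{m,+}$ and $-1$ for $G_{m,-}$; so the difference becomes $2$ times... no — rather, writing $\phi$ for the common ``rest'' of the state, the $G_{m,+}$ term contributes $+\s(\phi_{\mathrm{rest}})$ and the $G_{m,-}$ term contributes $-(-\s(\phi_{\mathrm{rest}}))=+\s(\phi_{\mathrm{rest}})$... one must be careful: the difference $(+1)\s(\phi_{\mathrm{rest}})-(-1)\s(\phi_{\mathrm{rest}})$ double counts, so the correct bookkeeping (as in \cite{CKR}) is that in $G_{m,-}$ the arrow has the \emph{opposite} orientation as well, so a descending state of $G_{m,-}$ using it corresponds to a \emph{non-descending} configuration and is therefore excluded; only $G_{m,+}$ contributes states using the distinguished arrow descendingly.

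Third, I would set up a bijection between the descending states of $G_{m,+}$ that use the distinguished arrow (with $n$ arrows) and the descending states of $G_{m,0}$ (with $n-1$ arrows): deleting the distinguished arrow and performing the smoothing merges the two arcs of the fragment into a single arc of $G_{m,0}$, and — because the distinguished arrow is passed \emph{first in the direction of its core} in a descending diagram — the remaining $n-1$ arrows of the state inherit a descending order on the merged circle structure of $G_{m,0}$, with the base point transported along. Conversely, any descending state of $G_{m,0}$ re-opens to a unique descending state of $G_{m,+}$ using the distinguished arrow. This bijection is sign-preserving (the deleted arrow had sign $+1$) and decreases the arrow count by exactly one, giving $D_{n,1,1}(G_{m,+})-D_{n,1,1}(G_{m,-})=D_{n-1,1,1}(G_{m,0})$; summing against $z^n$ yields the stated relation.

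The main obstacle is the third step: verifying that the ``open up / smooth and merge'' operation really is a bijection on \emph{descending} states and that descendingness is preserved in both directions. One has to track carefully how the base point $*_1$ and the global order of traversal of the single boundary component change when the fragment is smoothed — in particular that no new arrow can jump from being passed ``core-first'' to ``core-last'' or vice versa under the merge — and to check the genus/number-of-boundary-component count is the expected one (using the Euler characteristic relation $\chi(\S(A))=r-n$ from the Remark). Since this is exactly the content of \cite{B,CKR}, for the present paper it is legitimate to invoke those results; the argument above is the outline one would fill in for a self-contained treatment.
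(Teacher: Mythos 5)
The paper does not actually prove this theorem; it imports it from \cite{B,CKR}, so your decision to invoke those references is consistent with what the paper does. The problem is with the self-contained sketch you offer in its place: step 2 contains a genuine error. It is \emph{not} true that only $G_{m,+}$ can have descending states containing the distinguished arrow. Reversing the arrow does not change the surface $\S(S\cup\{\a\})$ or its boundary traversal, only the direction of the core; so for a fixed set $S$ of remaining arrows, exactly one of $S\cup\{\a_+\}$ (in $G_{m,+}$) and $S\cup\{\a_-\}$ (in $G_{m,-}$) is descending, and \emph{which} one depends on whether the traversal from $*_1$ first reaches the distinguished ribbon on the side parallel to $\a_+$ or to $\a_-$ --- a global feature of the configuration, not of the local fragment. (The paper's own trefoil example illustrates that the descending condition is sensitive to exactly this: among the three interleaved two-arrow states only $\{a,b\}$ is descending, so flipping one arrow can turn a non-descending state into a descending one.) The correct bookkeeping is the one you started and then abandoned: if $G_{m,+}$ carries the descending state it contributes $(+1)\s(S)$ to $D_{n,1,1}(G_{m,+})$, while if $G_{m,-}$ carries it, it contributes $(-1)\s(S)$ to $D_{n,1,1}(G_{m,-})$ and hence $+\s(S)$ to the \emph{difference}; in either case the net contribution to $D_{n,1,1}(G_{m,+})-D_{n,1,1}(G_{m,-})$ is $+\s(S)$, matching the contribution of the smoothed state $S$ to $D_{n-1,1,1}(G_{m,0})$. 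There is no double counting because each configuration is descending in only one of the two diagrams.

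This error propagates into step 3: your bijection is set up only between descending states of $G_{m,+}$ using $\a_+$ and descending states of $G_{m,0}$, so the $G_{m,0}$-states whose reopened configuration is descending in $G_{m,-}$ rather than $G_{m,+}$ would have no preimage, and the count would come out wrong. The bijection should be between descending one-boundary states of $G_{m,0}$ and the union over \emph{both} $G_{m,+}$ and $G_{m,-}$ of descending one-boundary states containing the distinguished arrow, using the identification $\S_{G_{m,0}}(S)\cong\S_{G_{m,\pm}}(S\cup\{\a\})$ to transfer both the boundary-component count and the descending condition on the remaining arrows. With that correction (and the sign compensation above) your outline becomes the standard argument of \cite{CKR}.
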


Here we define a notion of a separating state. This notion will be used in the proof of Theorem \ref{thm:skein1}.
\begin{figure}[htb]
\centerline{\includegraphics[height=1.2in]{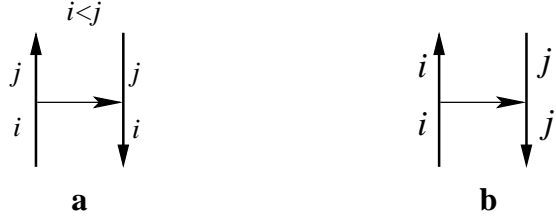}}
\caption{\label{fig:asc-des-arcs} Descending labeling.}
\end{figure}

\begin{defn}\rm
Let $G_m$ be a braid Gauss diagram and $G\in\mathfrak{G}_{k,m}$.
A \textit{descending separating state $S$ of $G$} is a state $S$ of $G$, together with a labeling of all arcs of $G$ by numbers from $1$ to $k$ such that:
\begin{itemize}
\item
An arc with a basepoint $*_i$ is labeled by $i$.
\item
Each arc near $\a\in S$ is labeled as in Figure \ref{fig:asc-des-arcs}a.
\item
Each arc near $\a\notin S$ is labeled as in Figure \ref{fig:asc-des-arcs}b.
\end{itemize}
\end{defn}

Let $G\in\mathfrak{G}_{k,m}$. Then every descending separating state $S$ in $G$ defines a new Gauss diagram $G_S$ with labeled circles as follows:\\
We smooth each arrow in $G$ which belongs to $S$, as shown in Figure \ref{fig:smoothing1}, and denote resulting smoothed Gauss diagram by $G_S$. Each circle in $G_S$ is labeled by $i$, if it contains an arc labeled by $i$.

\begin{figure}[htb]
\centerline{\includegraphics[height=0.75in]{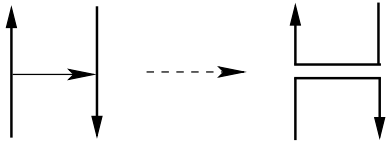}}
\caption{\label{fig:smoothing1} Smoothing of an arrow.}
\end{figure}

Now we return to arrow diagrams. Let $A\in\mathcal{D}_{n,k}$. We denote by $\sigma(A)$ the set of separating arrows in $A$ and label the arcs of circles in $A$ by $i$ if the corresponding arc belongs to the $i$-th boundary component of $\S(A)$. Note that for each $G\in\mathfrak{G}_{k,m}$ the homomorphism $\phi:A\rightarrow G$ induces a descending separating state $S$ of $G$, by taking $S=\phi(\sigma(A))$ and labeling each arc of $G$ by the same label as the corresponding arc of $A$.

\begin{defn} \rm
Let $S$ be a descending separating state of $G\in\mathfrak{G}_{k,m}$, $A\in\mathcal{D}_{n,k}$, and $\phi:A\rightarrow G$. We say that $\phi$ is \textit{$S$-admissible}, if a descending separating state induced by $\phi$ coincides with $S$.
\end{defn}

\begin{defn}\label{defn:sep-state-pairing} \rm
Let $S$ be a descending separating state of $G\in\mathfrak{G}_{k,m}$, and $A\in\mathcal{D}_{n,k}$. We define an \textit{$S$-pairing} $\AG_S$ by:
$$\AG_S:=\sum\limits_{\phi:A\rightarrow G}\s(\phi),$$
where the summation is over all $S$-admissible $\phi:A\rightarrow G$. We set
$$D_{n,k}(G)_S:=\sum_{A\in\D_{n,k}}\AG_S.$$
\end{defn}

Every descending separating state $S$ of $G$
defines Gauss diagrams $\{G^i_S\}_{i=1}^k$ as follows: $G^i_S$ consists of all circles of $G_S$ labeled by $i$, and its
arrows are arrows of $G$ with both ends on these circles. All arrows with ends
on circles of $G_S$ with different labels are removed. The base point of $G^i_S$ is
the base point $*_i$ of $G$.

Each $G^i_S$ corresponds to link $L^i_S$ which is defined as follows.
We smooth all crossings which correspond to arrows in $S$, as shown below:
\begin{equation*}
\ris{-4}{-3}{55}{Conway-smoothing}{-1.1}
\end{equation*}
We obtain a diagram of a smoothed link $L_S$ with labeling of components
induced from the labeling of circles of $G_S$. Denote by $L^i_S$ a
sublink which consists of components labeled by $i$.

It follows from Remark \ref{rem:1-bd} that for every $n\geq 0$
we have
$$D_{n,1}(G^i_S)=c_n(L^i_S).$$
Using this together with the definition of $D_{n,k}(G)_S$ we get

\begin{lem}\label{lem:sep-states}
Let $G_m$ be a braid Gauss diagram of a link $L$ and $G\in\mathfrak{G}_{k,m}$
Then for every $n\geq 0$ and a descending separating state $S$ of $G$ we have
\begin{equation*}
D_{n,k}(G)_S=\s(S)\sum\prod_{j=1}^k c_{i_j}(L^j_S),
\end{equation*}
where the summation is over indices $i_1,...,i_k$ such that $\sum_{j=1}^k i_j=n-|S|$.
Here $|S|$ is the number of arrows in $S$ and $\s(S)=\prod_{\a\in S}\s(\a)$.
\end{lem}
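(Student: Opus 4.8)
The plan is to unfold the definition of $D_{n,k}(G)_S$ and factor it as a product of one--boundary--component counts over the smoothed pieces $G^1_S,\dots,G^k_S$. Concretely,
$$D_{n,k}(G)_S=\sum_{A\in\D_{n,k}}\AG_S=\sum_{(A,\phi)}\s(\phi),$$
where $(A,\phi)$ ranges over all pairs with $A\in\D_{n,k}$ and $\phi\colon A\to G$ an $S$-admissible homomorphism. Since a state of $G$ determines both the arrow diagram and the homomorphism producing it, this sum is really a sum over the states $S(\phi)$ that occur; the task is to describe exactly which states occur and to read off the description as a product condition over $G^1_S,\dots,G^k_S$.

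First I would extract the elementary consequences of $S$-admissibility. By definition the descending separating state induced by $\phi$ is $S$, and since $\phi$ is injective on arrows it carries the set $\sigma(A)$ of separating arrows of $A$ bijectively onto $S$; hence $|\sigma(A)|=|S|$, the remaining $n-|S|$ arrows of $A$ are non-separating, and they are mapped into non-$S$ arrows of $G$. The arc-labels transported by $\phi$ (the boundary-component labels of $A$) agree with the labels of $S$, so by the local rule for non-state arrows (Figure~\ref{fig:asc-des-arcs}b) every non-$S$ arrow of $S(\phi)$ has, after the arrows of $S$ are smoothed, both feet on circles of $G_S$ labelled $j$ for one and the same $j$; that is, it is an arrow of $G^j_S$. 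Putting $S'_j:=S(\phi)\cap\{\text{arrows of }G^j_S\}$, we get $S(\phi)=S\sqcup S'_1\sqcup\cdots\sqcup S'_k$ with $\sum_{j=1}^k|S'_j|=n-|S|$, and, since smoothing arrows does not change the signs of the others,
$$\s(\phi)=\prod_{\a\in S(\phi)}\s(\a)=\s(S)\prod_{j=1}^k\Bigl(\prod_{\a\in S'_j}\s(\a)\Bigr).$$

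Next I would establish the bijection between $S$-admissible pairs $(A,\phi)$ with $A\in\D_{n,k}$ and tuples $(A_j,\phi_j)_{j=1}^k$ with $A_j\in\D_{i_j,1}$, $\phi_j\colon A_j\to G^j_S$ and $i_1+\cdots+i_k=n-|S|$, where $(A_j,\phi_j)$ is the pair determined by the state $S'_j$ of $G^j_S$. Two points require checking. For the boundary count: smoothing $S(\phi)$ produces a number of circles equal to the number of boundary components of $\S(A)$; since smoothing $S$ first splits $G_S$ into the labelled pieces $G^j_S$, that number equals the sum over $j$ of the number of circles of $G^j_S$ obtained after smoothing $S'_j$, hence the sum over $j$ of the number of boundary components of $\S(A_j)$; so $\S(A)$ has $k$ boundary components exactly when each $\S(A_j)$ has one. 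For the descending condition: once the arrows of $S$ are smoothed the boundary of $\S(A)$ is distributed over the labelled pieces, and inside the $j$-th piece the further smoothing of $S'_j$ together with the ordered traversal starting at $*_j$ is precisely the data that decides whether $A_j$ is descending; the only remaining ingredient in the definition of descending — the direction in which one first runs along each arrow of $S$ — is exactly what the labelling convention of Figure~\ref{fig:asc-des-arcs}a prescribes, hence holds automatically because $S$ is a descending separating state. Conversely, a tuple $(A_j,\phi_j)_j$ determines the state $S\sqcup\bigsqcup_j S'_j$ of $G$, hence a pair $(A,\phi)$, and the same two observations give $A\in\D_{n,k}$ and $\phi$ $S$-admissible.

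Granting the bijection and the sign formula, the rest is bookkeeping:
$$D_{n,k}(G)_S=\s(S)\sum_{i_1+\cdots+i_k=n-|S|}\ \prod_{j=1}^k\Bigl(\,\sum_{A_j\in\D_{i_j,1}}\ \sum_{\phi_j\colon A_j\to G^j_S}\s(\phi_j)\Bigr)=\s(S)\sum_{i_1+\cdots+i_k=n-|S|}\ \prod_{j=1}^k D_{i_j,1}(G^j_S),$$
and invoking the identity $D_{i_j,1}(G^j_S)=c_{i_j}(L^j_S)$ recorded just before the statement (Remark~\ref{rem:1-bd}) turns this into the asserted formula. I expect the descending part of the bijection to be the main obstacle: one must argue carefully, in terms of the ordered boundary traversal of $\S(A)$ and the local labellings of Figure~\ref{fig:asc-des-arcs}, that the descending property is inherited by each piece $A_j$ and, conversely, is recovered across the arrows of $S$ when the pieces are glued back together. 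The boundary-count bookkeeping and the sign factorization are then routine.
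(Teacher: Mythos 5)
Your proposal is correct and follows exactly the route the paper intends: the paper states this lemma with no written proof beyond the observation that it "follows from Remark \ref{rem:1-bd} together with the definition of $D_{n,k}(G)_S$", and your argument simply supplies the details of that assertion — decomposing an $S$-admissible state into $S$ plus one descending one-boundary-component state on each $G^j_S$, factoring the signs, and invoking $D_{i,1}(G^j_S)=c_i(L^j_S)$. The points you flag as delicate (the descending condition splitting across the pieces, with the separating arrows handled automatically by the labeling convention of Figure \ref{fig:asc-des-arcs}a, and the boundary-component count under smoothing) are exactly the right ones, and your treatment of them is sound.
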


Summing over all $G\in\mathfrak{G}_{j,m}$ and over all descending separating states $S$ of $G$, we obtain
\begin{cor}\label{cor:sep-states}
Let $G_m$ be a braid Gauss diagram of a link $L$.
Then for every $n\geq 0$ and $1\leq j\leq k$
\begin{align*}
&D_{n,k,j}(G_m)=\sum_{G\in\mathfrak{G}_{j,m}}\sum_{l=0}^{n+j-k}\sum_{S,|S|=l}D_{n+j-k,j}(G)_S\quad \rm{or}\\
&D_{n,k,j}(G_m)=\sum_{G\in\mathfrak{G}_{j,m}}\sum_{l=0}^{n+j-k}\sum_{S,|S|=l}\s(S)\sum_{i_1,...,i_j}\prod_{t=1}^j c_{i_t}(L^t_S),
\end{align*}
where the third summation is over all descending
separating states $S$ of $G\in\mathfrak{G}_{j,m}$, and the fourth summation is over all indices $i_1,...,i_j$ such that $\sum_{t=1}^j i_t=n+j-k-l$.
\end{cor}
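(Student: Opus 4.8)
The plan is to unwind the definition of $D_{n,k,j}(G_m)$ and to regroup its contributing homomorphisms according to the descending separating state they induce, after which Lemma~\ref{lem:sep-states} finishes the job. Fix $G\in\mathfrak{G}_{j,m}$; by Definition~\ref{defn:P-k} the contribution of $G$ to $D_{n,k,j}(G_m)$ is
$$\sum_{A\in\D_{n+j-k,j}}\AG=\sum_{A\in\D_{n+j-k,j}}\ \sum_{\phi:A\to G}\s(\phi),$$
the inner sum being over homomorphisms in the sense of Definition~\ref{defn:MultiArrow-BraidGauss}. As observed in the paragraph preceding the notion of $S$-admissibility, each such $\phi$ with $A$ descending determines a descending separating state $S=\phi(\sigma(A))$ of $G$, whose arc labels are inherited from the boundary-component labels of $A$. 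Hence the pairs $(A,\phi)$ partition according to this induced state, and, by the definitions of $S$-admissibility and of the $S$-pairing,
$$\sum_{A\in\D_{n+j-k,j}}\AG=\sum_{S}\ \sum_{A\in\D_{n+j-k,j}}\AG_S=\sum_S D_{n+j-k,j}(G)_S,$$
where $S$ ranges over all (finitely many) descending separating states of $G$; a state realized by no homomorphism contributes $0$ and may be harmlessly retained.

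Next I would stratify this sum by $l:=|S|$. Since for any $\phi:A\to G$ the separating arrows $\sigma(A)$ form a subset of the $n+j-k$ arrows of $A$, one has $0\le l\le n+j-k$, so
$$\sum_{A\in\D_{n+j-k,j}}\AG=\sum_{l=0}^{n+j-k}\ \sum_{S,\ |S|=l}D_{n+j-k,j}(G)_S,$$
and summing over all $G\in\mathfrak{G}_{j,m}$ gives the first displayed identity of the corollary.

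Finally I would substitute Lemma~\ref{lem:sep-states}, applied with its parameters $(n,k)$ replaced by $(n+j-k,j)$: for a descending separating state $S$ of $G\in\mathfrak{G}_{j,m}$ with $|S|=l$ it yields $D_{n+j-k,j}(G)_S=\s(S)\sum\prod_{t=1}^j c_{i_t}(L^t_S)$, the sum being over indices $i_1,\dots,i_j$ with $\sum_{t=1}^j i_t=(n+j-k)-l$. Plugging this into the first identity produces precisely the second displayed identity, and the corollary follows. I do not anticipate a genuine obstacle: the substantive input is Lemma~\ref{lem:sep-states} together with the already-established fact that homomorphisms out of descending diagrams induce descending separating states; the only points needing care are that the induced-state partition is exhaustive and that $|\sigma(A)|\le n+j-k$ forces the stated range of $l$.
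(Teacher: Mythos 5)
Your proposal is correct and follows the same route as the paper, which derives the corollary simply by summing Lemma \ref{lem:sep-states} (in the form $D_{n+j-k,j}(G)_S$) over all $G\in\mathfrak{G}_{j,m}$ and all descending separating states $S$; you have merely made explicit the partition of homomorphisms by induced state and the bound $|S|\le n+j-k$, which the paper leaves implicit.
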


At this point we establish the skein relation for $D_{n,k,j}(G_m)$.
\begin{thm}\label{thm:skein1}
Let $G_{m,+}$, $G_{m,-}$, $G_{m,0}$ be a Conway triple of braid Gauss diagrams, see Figure \ref{fig:Conway}.  Then
\begin{equation}\label{eq:A1}
D_{n,k,j}(G_{m,+})-D_{n,k,j}(G_{m,-})=D_{n-1,k,j}(G_{m,0})
\end{equation}
\end{thm}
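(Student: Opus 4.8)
The plan is to deduce \eqref{eq:A1} from the one-component skein relation (Theorem \ref{thm:1-comp-skein-relation}, i.e. the Conway skein relation $\nabla(L_+)-\nabla(L_-)=z\,\nabla(L_0)$ for the coefficients $c_n$) through the separating-state expansion of Corollary \ref{cor:sep-states}; for $j=1$ the statement is in fact immediate from Theorem \ref{thm:1-comp-skein-relation} and Remark \ref{rem:1-bd}, so the content is in the general $j$. It is convenient to repackage Corollary \ref{cor:sep-states} as a generating function: setting $Q_j(G_m):=\sum_{G\in\mathfrak{G}_{j,m}}\sum_{S}\s(S)\,z^{|S|}\prod_{t=1}^j\nabla(L^t_S)$, where the inner sum runs over all descending separating states $S$ of $G$, one has $D_{n,k,j}(G_m)=[z^{n+j-k}]\,Q_j(G_m)$, and so \eqref{eq:A1} is equivalent to $Q_j(G_{m,+})-Q_j(G_{m,-})=z\,Q_j(G_{m,0})$. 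The first step is then to record the three elementary effects of the Conway triple on the data appearing in $Q_j$: passing from $G_{m,+}$ to $G_{m,-}$ only flips the sign $\s(\alpha)$ of the distinguished arrow $\alpha$ (the one crossing in which the three diagrams differ) and leaves $\mathfrak{G}_{j,m}$, the set of descending separating states, the numbers $|S|$, and every smoothed sublink $L^t_S$ with $\alpha\in S$ unchanged; passing to $G_{m,0}$ is the oriented smoothing at $\alpha$, which deletes $\alpha$, changes the number of circles of each $G\in\mathfrak{G}_{j,m}$ by $\pm1$, and reconnects the two arcs at $\alpha$.

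The second step is to split the sum defining $Q_j(G_{m,\pm})$ according to the position of $\alpha$. If $\alpha\notin S$ and $\alpha$ joins two circles of $G_S$ carrying different labels, then $\alpha$ is discarded in the passage to the $G^i_S$, so $\prod_t\nabla(L^t_S)$ and $\s(S)$ are unaffected by the sign of $\alpha$ and these states contribute nothing to $Q_j(G_{m,+})-Q_j(G_{m,-})$; such states also fail to produce descending separating states of $G_{m,0}$, since the oriented smoothing at $\alpha$ would merge circles of distinct labels. If $\alpha\notin S$ and the two local strands at $\alpha$ carry the same label $t_0$ (the only remaining possibility, by the labeling rule of Figure \ref{fig:asc-des-arcs}b), then $\alpha$ is a crossing of the single sublink $L^{t_0}_S$ and $(L^{t_0}_{S,+},L^{t_0}_{S,-},L^{t_0}_{S,0})$ is a Conway triple; applying $\nabla(L^{t_0}_{S,+})-\nabla(L^{t_0}_{S,-})=z\,\nabla(L^{t_0}_{S,0})$ and leaving the factors $\nabla(L^t_S)$ with $t\ne t_0$ alone turns the contribution of these states to $Q_j(G_{m,+})-Q_j(G_{m,-})$ into $z$ times a sum of exactly the shape of $Q_j$, now built from the sublinks of $G_{m,0}$ and from states not containing $\alpha$. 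Finally, if $\alpha\in S$ the smoothed links $L^t_S$ coincide for $G_{m,+}$ and $G_{m,-}$ while $\s(S)$ changes sign, and smoothing $\alpha$ first (which produces $G_{m,0}$) and then the remaining arrows of $S$ yields the same collection of links, now attached to the state $S\setminus\{\alpha\}$ of $G_{m,0}$, with one fewer arrow and — since $\s(\alpha)=+1$ in $G_{m,+}$ — the same sign, but entering $Q_j(G_{m,+})-Q_j(G_{m,-})$ with a factor $2$.

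The decisive step, and the one I expect to be the main obstacle, is to reconcile these pieces with $z\,Q_j(G_{m,0})$, i.e. to show that every descending separating state of $G_{m,0}$ is produced with precisely the right multiplicity. Because the oriented smoothing at $\alpha$ changes the number of circles, a set of arrows that is a descending separating state of $G_{m,\pm}$ need not be one of $G_{m,0}$ and conversely, so one cannot identify states naively; the resolution should be a two-to-one correspondence, namely that for each descending separating state $S_0$ of $G_{m,0}$ both $S_0$ and $S_0\cup\{\alpha\}$ are descending separating states of $G_{m,\pm}$, compatibly with signs, with the shift of $|S|$ by one, and with the arc labelings — the factor $2$ from the $\alpha\in S$ states and the single count coming from the $\alpha\notin S$ skein step then together account for each $S_0$ exactly once after the overall shift by $z$. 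The real work is in verifying this bookkeeping, i.e. in controlling how the order of passage along the boundary components and the separating-arc labeling behave when a circle of $G_S$ is split or merged at $\alpha$; granting it, combining with the Conway skein computation of the second step yields $Q_j(G_{m,+})-Q_j(G_{m,-})=z\,Q_j(G_{m,0})$ and hence \eqref{eq:A1}.
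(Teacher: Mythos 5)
Your overall strategy is the same as the paper's: expand $D_{n,k,j}$ over descending separating states via Lemma \ref{lem:sep-states} and Corollary \ref{cor:sep-states}, split according to the labels at the distinguished crossing, and use the Conway skein relation on the one sublink factor that sees the crossing. The first two of your three cases ($\a\notin S$ with distinct labels, contributing zero; $\a\notin S$ with a common label $t_0$, handled by the skein relation for $\nabla(L^{t_0}_S)$) match the paper's cases (b) and (a).

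There is, however, a genuine error in your third case, and it is exactly the point you flagged as the main obstacle. You claim that a state $S$ containing the distinguished arrow is a descending separating state of \emph{both} $G_{m,+}$ and $G_{m,-}$, so that the sign flip of $\a$ produces a factor $2$, and you then propose a two-to-one correspondence with states of $G_{m,0}$. This is false: a crossing change reverses the direction of the arrow as well as its sign, so $\a_+$ and $\a_-$ point in opposite directions, and the descending labeling condition of Figure \ref{fig:asc-des-arcs}a for an arrow in $S$ is \emph{not} symmetric under reversal of the arrow. Consequently, for a fixed arc labeling with distinct labels at the head of $\a_\pm$, exactly one of $S_0\cup\{\a_+\}$ (in $G_{m,+}$) and $S_0\cup\{\a_-\}$ (in $G_{m,-}$) is a descending separating state; the correspondence with states $S_0$ of $G_{m,0}$ is one-to-one, not two-to-one. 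The contribution to the difference is then $\s(S_0)(\cdots)-0$ in the first subcase and $0-(-\s(S_0))(\cdots)$ in the second (using $\s(\a_+)=+1$, $\s(\a_-)=-1$), i.e.\ a factor $1$ either way. Your proposed repair also cannot work as stated for a second reason: the states of $G_{m,0}$ produced by the skein step (same label on all four arcs) and those produced by lifting $\a$ into the state (distinct labels at the site) are disjoint families, so they cannot jointly ``account for each $S_0$ exactly once''; with your factor $2$ the distinct-label states of $G_{m,0}$ would be counted twice and \eqref{eq:A1} would fail. Note also that $S_0$ and $S_0\cup\{\a_\pm\}$ cannot both be descending separating states of $G_{m,\pm}$ for the same arc labeling, since the local label patterns of Figures \ref{fig:asc-des-arcs}a and \ref{fig:asc-des-arcs}b are mutually exclusive.
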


\begin{proof}
Let $G_+\in(\mathfrak{G}_+)_{j,m}$, $G_-\in(\mathfrak{G}_-)_{j,m}$, $G_0\in(\mathfrak{G}_0)_{j,m}$ be a corresponding triple of colored
braid Gauss diagrams, i.e. if an arc in $G_+$ or in $G_-$ or in $G_0$ is labeled by some $a_i$, then it is colored by the same number or a basepoint in view of Definition \ref{defn:labeling}. Denote the arrows of $G_+$ and $G_-$ appearing in Figure \ref{fig:Conway}
by $\a_+$ and $\a_-$, respectively.

\begin{figure}[htb]
\centerline{\includegraphics[width=4in]{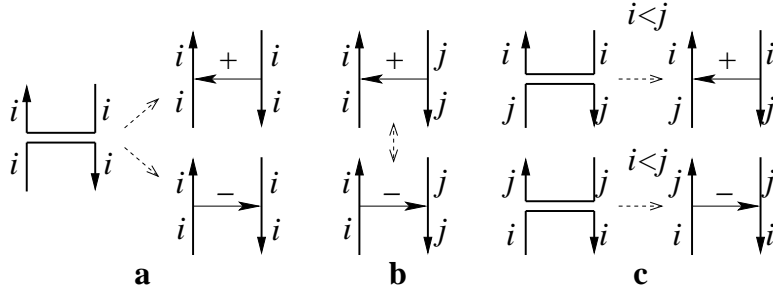}}
\caption{\label{fig:skein-sep-states} Correspondence of separating states of $G_0$ and $G_\pm$.}
\end{figure}

Let us look at labels of descending separating states of $G_\pm$ and $G_0$ on
four arcs of the shown fragment. If labels of all four arcs are the same, we may
identify states of $G_\pm$ and $G_0$ with the same arrows and labels of arcs,
see Figure \ref{fig:skein-sep-states}a. Lemma \ref{lem:sep-states} and
Theorem \ref{thm:1-comp-skein-relation} imply, that for every such state $S$
$$D_{n+j-k,j}(G_+)_S-D_{n+j-k,j}(G_-)_S=D_{n+j-k-1,j}(G_0)_S.$$
If labels on two arcs near the head of $\a_\pm$ coincide, but differ from
labels near the tail of $\a_\pm$, by Lemma \ref{lem:sep-states} we
have $D_{n+j-k,j}(G_+)_S-D_{n+j-k,j}(G_-)_S=0$ for any such state $S$
of $G_\pm$, and there is no corresponding state of $G_0$. See Figure
\ref{fig:skein-sep-states}b (for $i\neq j$).

There is one more case when labels of two arcs near the head of
$\a_\pm$ are different. Such a state $S$ of $G_0$ corresponds either
to a descending separating state $S\cup\a_+$ of $G_+$, or to a
descending separating state $S\cup\a_-$ of $G_-$, see Figure
\ref{fig:skein-sep-states}c. By Lemma \ref{lem:sep-states} we have
$D_{n+j-k,j}(G_+)_{S\cup\a_+}=D_{n+j-k-1,j}(G_0)_S$ in the first case
and $D_{n+j-k,j}(G_-)_{S\cup\a_-}=-D_{n+j-k-1,j}(G_0)_S$ in the second case.
Summing over all $G_+\in(\mathfrak{G}_+)_{j,m}$, $G_-\in(\mathfrak{G}_-)_{j,m}$, $G_0\in(\mathfrak{G}_0)_{j,m}$, over all descending separating states of $G_+$, $G_-$, $G_0$ and using Corollary \ref{cor:sep-states},
we obtain the statement of the theorem.
\end{proof}

For a pair $k,j$ such that $1\leq j\leq k$ set
$$A_{k,j}(G_m):=\sum_{n=0}^\infty D_{n,k,j}(G_m) z^{n+j-k}.$$ It follows from Definition \ref{defn:P-k} that
\begin{equation}\label{eq:P-k}
P_k(G_m)=\sum_{j=1}^k f_j^{(k)}(G_m)z^{k-j}A_{k,j}(G_m).
\end{equation}
Moreover, the following Corollary follows immediately from Theorem \ref{thm:skein1}.

\begin{cor}\label{cor:skein-A-k-j}
Let $G_{m,+}$, $G_{m,-}$, $G_{m,0}$ be a Conway triple of braid Gauss diagrams, then
$$A_{k,j}(G_{m,+})-A_{k,j}(G_{m,-})=zA_{k,j}(G_{m,0}).$$
\end{cor}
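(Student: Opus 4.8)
The plan is to derive Corollary~\ref{cor:skein-A-k-j} as a direct formal consequence of Theorem~\ref{thm:skein1}, with no new geometric input. Recall that $A_{k,j}(G_m)=\sum_{n=0}^\infty D_{n,k,j}(G_m)\,z^{n+j-k}$, so the generating function $A_{k,j}$ is obtained from the sequence $\{D_{n,k,j}(G_m)\}_{n\geq 0}$ by attaching the weight $z^{n+j-k}$ to the $n$-th term. The key observation is that the index shift in \eqref{eq:A1} — namely that $D_{n,k,j}(G_{m,+})-D_{n,k,j}(G_{m,-})$ equals $D_{n-1,k,j}(G_{m,0})$ rather than $D_{n,k,j}(G_{m,0})$ — is precisely compensated by multiplication by $z$ once we pass to the generating functions with these weights.

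First I would write out the three generating functions explicitly:
\begin{align*}
A_{k,j}(G_{m,\pm})&=\sum_{n=0}^\infty D_{n,k,j}(G_{m,\pm})\,z^{n+j-k},\\
A_{k,j}(G_{m,0})&=\sum_{n=0}^\infty D_{n,k,j}(G_{m,0})\,z^{n+j-k}.
\end{align*}
Then I would take the difference of the first two and apply Theorem~\ref{thm:skein1} termwise:
\begin{equation*}
A_{k,j}(G_{m,+})-A_{k,j}(G_{m,-})=\sum_{n=0}^\infty\bigl(D_{n,k,j}(G_{m,+})-D_{n,k,j}(G_{m,-})\bigr)z^{n+j-k}=\sum_{n=0}^\infty D_{n-1,k,j}(G_{m,0})\,z^{n+j-k}.
\end{equation*}

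Next I would handle the $n=0$ term of this last sum, reindex, and factor out one power of $z$. One must check that $D_{-1,k,j}(G_{m,0})=0$ — this holds because $D_{n,k,j}$ is defined only for $n\geq 0$ (the inner sum ranges over $\D_{n+j-k,j}$, which is empty when the arrow-number index $n+j-k$ would be forced negative in a way that kills the sum, and in any case the generating function $P_k$ in Definition~\ref{defn:P-k} starts at $n=0$), so the $n=0$ term contributes nothing. Substituting $n'=n-1$ in the remaining terms gives $\sum_{n'=0}^\infty D_{n',k,j}(G_{m,0})\,z^{n'+1+j-k}=z\sum_{n'=0}^\infty D_{n',k,j}(G_{m,0})\,z^{n'+j-k}=z\,A_{k,j}(G_{m,0})$, which is the claimed identity. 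The only mild subtlety — and the closest thing to an obstacle — is making sure the exponent bookkeeping is airtight: the shift $n\mapsto n-1$ in the subscript of $D$ and the fixed offset $j-k$ in the exponent of $z$ must be tracked simultaneously, but since the offset $j-k$ is constant throughout, no genuine difficulty arises, and the corollary falls out immediately.
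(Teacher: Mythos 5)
Your proposal is correct and matches the paper's (implicit) argument: the paper states that the corollary "follows immediately from Theorem \ref{thm:skein1}," and the intended justification is exactly your termwise application of \eqref{eq:A1} followed by the reindexing $n\mapsto n-1$, with the vanishing of the $n=0$ term handled by the convention that $D_{n,k,j}=0$ for negative index. Nothing further is needed.
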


\section{Main theorem}
For a link $L$ and $k\geq 0$ denote by $\P_k(L):=z^kP_a^{(k)}(L)|_{a=1}$.

\begin{thm}\label{thm:main}
Let $G_m$ be a braid Gauss diagram of a link $L$, then for $k\geq 0$
$$P_{k+1}(G_m)=\P_{k}(L).$$
\end{thm}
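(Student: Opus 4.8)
The plan is to prove the identity $P_{k+1}(G_m)=\P_k(L)=z^kP_a^{(k)}(L)|_{a=1}$ by a double induction, arguing simultaneously on $k$ and on the number of crossings (arrows) of $G_m$, with the skein relation as the engine. For fixed $k$, both sides of the claimed identity are link invariants of $L$: the right-hand side manifestly so, and the left-hand side because $P_{k+1}(G_m)$ was assembled from the quantities $D_{n,k+1,j}$, which by Corollary~\ref{cor:skein-A-k-j} (or rather the combination \eqref{eq:P-k}) behave well under the braid Reidemeister moves. So the first step is to check that $P_{k+1}(G_m)$ is genuinely independent of the choice of braid Gauss diagram $G_m$ representing $L$ — this should follow from Corollary~\ref{cor:skein-A-k-j} together with invariance under $\Omega_1$ stabilization, where one must be careful that the writhe-dependent factors $f_j^{(k+1)}(G_m)=\bigl(a^{-m-w(G_m)+1}(a^2-1)^{j-1}\bigr)^{(k)}|_{a=1}$ absorb the change of $m$ and $w$ correctly under a positive/negative Markov stabilization. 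Once invariance is known, one reduces to checking the identity on a convenient set of generators of the skein module, e.g.\ unlinks.

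The second and main step is to verify that $P_{k+1}(G_m)$ satisfies the same skein relation and initial conditions as $\P_k(L)$. Differentiating the HOMFLY-PT skein relation $aP(L_+)-a^{-1}P(L_-)=zP(L_0)$ exactly $k$ times with respect to $a$ via the Leibniz rule and evaluating at $a=1$, one gets a relation of the form
\begin{equation*}
P_a^{(k)}(L_+)-P_a^{(k)}(L_-)=zP_a^{(k)}(L_0)-\sum_{i=1}^{k}\binom{k}{i}\bigl(P_a^{(k-i)}(L_+)-(-1)^{i}P_a^{(k-i)}(L_-)\bigr)\cdot c_i,
\end{equation*}
where $c_i$ are the coefficients coming from $(a^{\mp1})^{(i)}|_{a=1}$. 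Multiplying by $z^k$, this says that $\P_k$ satisfies an inhomogeneous skein relation whose inhomogeneity involves only the lower-order $\P_{k-i}$. The task is then to show that $P_{k+1}(G_m)$ obeys precisely this same inhomogeneous relation. Here one uses Corollary~\ref{cor:skein-A-k-j} for the ``leading'' piece and then has to recognize the inhomogeneous terms: when comparing $P_{k+1}(G_{m,+})$, $P_{k+1}(G_{m,-})$ and $P_{k+1}(G_{m,0})$ through the expansion \eqref{eq:P-k}, the crossing change alters the writhe $w(G_m)$ by $\pm 2$ while $L_0$ has one fewer strand or a changed permutation, so the factors $f_j^{(k+1)}$ differ between the three diagrams; expanding those differences by Taylor/Leibniz in $a$ produces exactly the lower-order correction terms, by the inductive hypothesis applied to $P_1,\ldots,P_k$.

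The third step is the base of the induction and the initial conditions. For $k=0$ the statement $P_1(G_m)=\P_0(L)=\nabla(L)$ is exactly Remark~\ref{rem:1-bd}, which cites Chmutov--Khoury--Rossi. For the crossing-number base case one evaluates $P_{k+1}$ on the $r$-component unlink $O_r$ (presented as the trivial braid $G_m$ with no arrows): only the empty state contributes, the descending arrow diagrams with $n$ arrows reduce to combinatorial counts on $r$ disjoint circles, and one must check this matches $z^k\bigl((a-a^{-1})/z\bigr)^{(k)}_a|_{a=1}$ up to the normalization built into the $f_j$'s — this is where the specific form of $f_j^{(k)}(G_m)=\bigl(a^{-m-w+1}(a^2-1)^{j-1}\bigr)^{(k-1)}|_{a=1}$ is pinned down and should be presented as an explicit (if slightly tedious) calculation, perhaps illustrated by the promised example. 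I expect the main obstacle to be the bookkeeping in the second step: correctly matching the Leibniz expansion of the $a$-derivatives of the HOMFLY-PT skein relation with the writhe-shift corrections hidden inside the $f_j^{(k+1)}$ factors, and in particular verifying that the ``two arcs near the head differ'' case of Theorem~\ref{thm:skein1} (which produces the genuinely new boundary component) is precisely what accounts for passing from $\P_{k-1}$-type terms to $\P_k$-type terms. Everything else — invariance, the unlink computation — is routine once this combinatorial/algebraic dictionary is in place.
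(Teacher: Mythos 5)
Your overall strategy coincides with the paper's: a double induction on $k$ and on the number of arrows, driven by the inhomogeneous skein relation obtained on one side by differentiating the HOMFLY-PT skein relation $k$ times (the paper's equation \eqref{eq:skein-P}) and on the other side by combining Corollary \ref{cor:skein-A-k-j} with an explicit computation of the differences $f_j^{(k+1)}(G_{m,0})-f_j^{(k+1)}(G_{m,\pm})$ coming from the writhe shift (the paper's equation \eqref{eq:skein-G}); note that $G_{m,0}$ stays on $m$ strings and only $w$ changes by $\pm1$, not the number of strands. Your identification of where the lower-order terms $\P_{k-i}$ come from, and your use of the $k=0$ case via Remark \ref{rem:1-bd}, match the paper exactly.

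The genuine gap is in your first step. You propose to first establish that $P_{k+1}(G_m)$ is independent of the choice of braid Gauss diagram and then ``reduce to a convenient set of generators of the skein module, e.g.\ unlinks.'' That invariance does \emph{not} follow from Corollary \ref{cor:skein-A-k-j}, which is a skein relation comparing three different links, nor is it available at this point in the paper: it is deferred to the Final Remarks, where it is stated to require a separate combinatorial argument (invariance under the moves of Figure \ref{fig:braid-Gauss-Rmoves}, conjugation and Markov stabilization, as in \cite{B}). Without it, your reduction to the crossingless presentation of $O_r$ does not go through, because the skein induction terminates not at the trivial braid but at a \emph{totally ascending} braid Gauss diagram $G'_m$ which still carries arrows, and you cannot replace $G'_m$ by the crossingless diagram of $O_r$ unless diagram-independence is already known. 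The paper circumvents this entirely: it evaluates $P_{k+1}$ directly on totally ascending diagrams (Corollary \ref{cor:tot-ascending}), and the normalization match with $\P_k(O_r)$ there rests on the identity $r=m+w(G_m)$ for totally ascending diagrams (Lemma \ref{lem:tot-asc-equality}), which is the one ingredient your outline is missing. Your base-case computation, which only treats the zero-arrow diagram where $w=0$ and $m=r$ trivially, must be strengthened to this statement for the induction to close.
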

We will prove this theorem at the end of the section. At this point we show that the polynomials $P_{k+1}$ and $\P_k$ satisfy the same skein relation. The skein relation for the polynomial $\P_k$ follows directly from the skein relation of HOMFLY-PT polynomial, i.e.
\begin{equation}\label{eq:skein-P}
\begin{split}
\P_k(L_+)-\P_k(L_-)+kz\P_{k-1}(L_+)+\sum_{i=0}^{k-1}(-1)^{k-1-i}\frac{k!}{i!}z^{k-i}\P_i(L_-)=zI_k(L_0).
\end{split}
\end{equation}

\begin{lem}
Let $G_m$ be a braid Gauss diagram of a link $L$. Then for every $k\geq 0$ we have
\begin{equation}\label{eq:skein-G}
\begin{split}
&P_{k+1}(G_{m,+})-P_{k+1}(G_{m,-})=\\
&zP_{k+1}(G_{m,0})-kzP_k(G_{m,+})-\sum_{i=0}^{k-1}(-1)^{k-1-i}\frac{k!}{i!}z^{k-i}P_{i+1}(G_{m,-}).
\end{split}
\end{equation}
\end{lem}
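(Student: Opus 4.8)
The plan is to reduce the claimed identity \eqref{eq:skein-G} for the combinatorially-defined polynomials $P_{k+1}(G_m)$ to the already-established skein relation for the building blocks $A_{k+1,j}$, namely Corollary~\ref{cor:skein-A-k-j}, which tells us that $A_{k+1,j}(G_{m,+})-A_{k+1,j}(G_{m,-})=zA_{k+1,j}(G_{m,0})$ for each fixed $j$. The only genuinely $G$-dependent scalars appearing in the decomposition \eqref{eq:P-k} of $P_{k+1}$ are the coefficients $f_j^{(k+1)}(G_m)=\bigl(a^{-m-w(G_m)+1}(a^2-1)^{j-1}\bigr)^{(k)}\big|_{a=1}$, which depend on $G_m$ only through its writhe $w(G_m)$. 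Since a positive-to-negative crossing change lowers the writhe by $2$ and the smoothing $G_{m,0}$ removes one crossing, we have $w(G_{m,+})=w(G_{m,-})+2$ and $w(G_{m,0})=w(G_{m,-})+1=w(G_{m,+})-1$. So the heart of the matter is a purely algebraic computation with the one-variable function $g(a):=a^{-m-w(G_{m,-})+1}$ and its derivatives at $a=1$.

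First I would substitute \eqref{eq:P-k} into the left-hand side of \eqref{eq:skein-G} and use Corollary~\ref{cor:skein-A-k-j} to pull out a factor governed by the $A_{k+1,j}$'s. Writing $w_-:=w(G_{m,-})$, set $F_j(t):=\bigl(t\,a^{-m-w_-+1}(a^2-1)^{j-1}\bigr)^{(k)}\big|_{a=1}$ as a function of the formal "writhe shift" — concretely $f_j^{(k+1)}(G_{m,-})=F_j(1)$, $f_j^{(k+1)}(G_{m,+})=F_j(a^{-2})$ evaluated appropriately, i.e. the substitutions amount to multiplying $g(a)$ by $a^{-2}$ or $a^{-1}$ before differentiating. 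Then
$$
P_{k+1}(G_{m,+})-P_{k+1}(G_{m,-})=\sum_{j=1}^{k+1} z^{k+1-j}\Bigl(f_j^{(k+1)}(G_{m,+})A_{k+1,j}(G_{m,+})-f_j^{(k+1)}(G_{m,-})A_{k+1,j}(G_{m,-})\Bigr),
$$
and I would split each bracketed term as $f_j^{(k+1)}(G_{m,+})\bigl(A_{k+1,j}(G_{m,+})-A_{k+1,j}(G_{m,-})\bigr)+\bigl(f_j^{(k+1)}(G_{m,+})-f_j^{(k+1)}(G_{m,-})\bigr)A_{k+1,j}(G_{m,-})$. The first piece, via Corollary~\ref{cor:skein-A-k-j}, produces $z\sum_j z^{k+1-j}f_j^{(k+1)}(G_{m,+})A_{k+1,j}(G_{m,0})$; after correcting $f_j^{(k+1)}(G_{m,+})$ to $f_j^{(k+1)}(G_{m,0})$ this contributes toward the $zP_{k+1}(G_{m,0})$ term on the right, leaving a further error term of the same algebraic type. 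The remaining task is to show that the accumulated error terms — all of the form $\bigl(\text{difference of }f_j^{(k+1)}\text{'s}\bigr)\cdot z^{\bullet}A_{k+1,j}(G_{m,\pm/0})$ — reassemble into exactly $-kzP_k(G_{m,+})-\sum_{i=0}^{k-1}(-1)^{k-1-i}\frac{k!}{i!}z^{k-i}P_{i+1}(G_{m,-})$.

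The key algebraic identity to verify is that differentiation of $(a^2-1)^{j-1}$ against the shifted monomials reproduces the binomial-type coefficients $\frac{k!}{i!}$ appearing in \eqref{eq:skein-P}; concretely, one wants that $\bigl(a^{-2}h(a)\bigr)^{(k)}|_{a=1}$, $h^{(k)}|_{a=1}$ and $\bigl(a^{-1}h(a)\bigr)^{(k)}|_{a=1}$ are related by Leibniz expansion in a way that, after being multiplied by the appropriate powers of $z$ and summed against $A_{k+1,j}$, collapse — using \eqref{eq:P-k} at lower indices $i+1\le k$ — into the lower polynomials $P_{i+1}$. I expect the main obstacle to be precisely this bookkeeping: matching the iterated-derivative coefficients coming from Leibniz's rule applied to $a^{-2}\cdot(\cdots)$ with the explicit factorial coefficients $(-1)^{k-1-i}\frac{k!}{i!}$ in the HOMFLY-PT-derived relation \eqref{eq:skein-P}, and checking that the power-of-$z$ shifts from $A_{k+1,j}$ versus $A_{k,j}$ (recall $A_{k,j}$ carries $z^{n+j-k}$) line up so that the index shift $k\rightsquigarrow i+1$ is consistent throughout. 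This is a finite, purely formal computation in one variable, so there is no conceptual difficulty once the correct reindexing is pinned down; the risk is entirely in sign and off-by-one errors, which I would control by first checking the cases $k=0$ (trivial, since then \eqref{eq:skein-G} reduces to Theorem~\ref{thm:1-comp-skein-relation} via Remark~\ref{rem:1-bd}) and $k=1$ (which recovers the skein relation established in \cite{B}) before writing the general induction.
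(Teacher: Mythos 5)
Your proposal follows essentially the same route as the paper: decompose $P_{k+1}$ via \eqref{eq:P-k}, apply Corollary \ref{cor:skein-A-k-j} to the $A_{k+1,j}$'s, and reduce everything to a Leibniz-rule computation on the writhe-dependent coefficients $f_j^{(k+1)}$ using $w(G_{m,+})=w(G_{m,0})+1=w(G_{m,-})+2$; your splitting of the error terms rearranges to exactly the two differences $f_j^{(k+1)}(G_{m,0})-f_j^{(k+1)}(G_{m,+})$ and $f_j^{(k+1)}(G_{m,-})-f_j^{(k+1)}(G_{m,0})$ that the paper evaluates. The one identity you leave as bookkeeping (that these differences equal $kf_j^{(k)}(G_{m,+})$ and $\sum_{i=0}^{k-1}(-1)^{k-1-i}\frac{k!}{i!}f_j^{(i+1)}(G_{m,-})$ respectively, combined with $f_{j}^{(i+1)}=0$ for $j>i+1$ and $A_{k+1,j}=A_{i+1,j}$) is precisely what the paper's proof carries out, and it does go through as you anticipate.
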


\begin{proof}
It follows from \eqref{eq:P-k} and Corollary \ref{cor:skein-A-k-j} that
\begin{equation}\label{eq:skein-part1}
\begin{split}
&P_{k+1}(G_{m,+})-P_{k+1}(G_{m,-})-zP_{k+1}(G_{m,0})=\\
&-\sum_{j=1}^{k+1}\left(f_j^{(k+1)}(G_{m,0})-f_j^{(k+1)}(G_{m,+})\right)z^{k+1-j}A_{k+1,j}(G_{m,+})-\\
&\sum_{j=1}^{k+1}\left(f_j^{(k+1)}(G_{m,-})-f_j^{(k+1)}(G_{m,0})\right)z^{k+1-j}A_{k+1,j}(G_{m,-}).
\end{split}
\end{equation}
Now
\begin{align*}
&f_j^{(k+1)}(G_{m,0})-f_j^{(k+1)}(G_{m,+})=\left(a^{-w(G_{m,+})-m+2}(a^2-1)^{j-1}\right)^{(k)}|_{a=1}-\\
&\left(a^{-w(G_{m,+})-m+1}(a^2-1)^{j-1}\right)^{(k)}|_{a=1}=kf_j^{(k)}(G_{m,+}).
\end{align*}
Note that $f_{k+1}^{(k)}(G_{m,+})=0$ and $A_{k+1,j}(G_{m,+})=A_{k,j}(G_{m,+})$. Hence we obtain
\begin{equation}\label{eq:G+}
\sum_{j=1}^{k+1}\left(f_j^{(k+1)}(G_{m,0})-f_j^{(k+1)}(G_{m,+})\right)z^{k+1-j}A_{k+1,j}(G_{m,+})=kzP_k(G_{m,+}).
\end{equation}
Similarly we have
\begin{align*}
&f_j^{(k+1)}(G_{m,-})-f_j^{(k+1)}(G_{m,0})=\left(a^{-w(G_{m,-})-m+1}(a^2-1)^{j-1}\right)^{(k)}|_{a=1}-\\
&\left(a^{-w(G_{m,-})-m}(a^2-1)^{j-1}\right)^{(k)}|_{a=1}=\sum_{i=0}^{k-1}(-1)^{k-1-i}\frac{k!}{i!}f_j^{(i+1)}(G_{m,-}).
\end{align*}
It follows that
\begin{equation}\label{eq:skein-part2}
\begin{split}
&\sum_{j=1}^{k+1}\left(f_j^{(k+1)}(G_{m,-})-f_j^{(k+1)}(G_{m,0})\right)z^{k+1-j}A_{k+1,j}(G_{m,-})= \\
&\sum_{j=1}^{k+1}\sum_{i=0}^{k-1}(-1)^{k-1-i}\frac{k!}{i!}f_j^{(i+1)}(G_{m,-})z^{k+1-j}A_{k+1,j}(G_{m,-})=\\
&\sum_{i=0}^{k-1}(-1)^{k-1-i}\frac{k!}{i!}z^{k-i}\sum_{j=1}^{i+1}f_j^{(i+1)}(G_{m,-})z^{i+1-j}A_{i+1,j}(G_{m,-})=\\
&\sum_{i=0}^{k-1}(-1)^{k-1-i}\frac{k!}{i!}z^{k-i}P_{i+1}(G_{m,-}),
\end{split}
\end{equation}
where the second equality follows from the fact that for $j>i+1$ we have $f_j^{(i+1)}(G_{m,-})=0$, and for $i<k$ we have $A_{k+1,j}(G_{m,-})=A_{i+1,j}(G_{m,-})$. Combining equalities \eqref{eq:skein-part1}, \eqref{eq:G+} and \eqref{eq:skein-part2} we conclude the proof of the lemma.
\end{proof}

Let $G_m$ be a braid Gauss diagram with $r$ circles. Of course $r\leq m$. Recall that $G_m$ contains $m$ arcs labeled by letters $a_1,...,a_m$. Let
$\{m_i\}_{i=1}^r$ be a unique subset of the set $\{i\}_{i=1}^m$ which is defined as follows:
\begin{itemize}
\item Each circle $C$ in $G_m$ contains $k_c$ arcs labeled by $a_{c_1},...,a_{c_{k_c}}$. Let $m_c=\min\{a_{c_1},...,a_{c_{k_c}}\}$.
There are exactly $r$ such numbers, i.e. one for each circle. We place them in an ascending order and the $i$-th number in this ascending sequence is denoted by $a_{m_i}$. In particular, it follows that $a_1=a_{m_1}<a_{m_2}<...\leq a_m$.
\end{itemize}
The sequence $\{m_i\}_{i=1}^r$ defines an obvious ordering of circles of $G_m$, i.e. the circle of $G_m$ which contains an arc labeled by $a_{m_i}$ is called the $i$-th circle.

\begin{defn}\rm
A braid Gauss diagram $G_m$ is called \emph{totally ascending}, if for each pair $i,j$ such that $1\leq i\leq j\leq r$ the following holds.
\begin{itemize}
\item When we walk on the $i$-th
circle of $G_m$ starting from an arc labeled by $a_{m_i}$ until we
return to this arc, we pass all arrows that connect $i$-th and $j$-th circles
first at the arrowhead.
\end{itemize}
\end{defn}

\begin{rem*}\rm
Note that if a totally ascending braid Gauss diagram $G_m$ on $r$ circles represents a link $L$. Then $L$ is an $r$-component unlink $O_r$.
\end{rem*}

\begin{lem}\label{lem:tot-asc-equality}
Let $G_m$ be a totally ascending braid Gauss diagram of an $r$-component unlink $O_r$. Then
$$r=m+w(G_m).$$
\end{lem}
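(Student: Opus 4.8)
The plan is to reduce the statement to a structural description of totally ascending diagrams and then to conclude by elementary permutation combinatorics. The structural fact I would isolate is the following: if $G_m$ is totally ascending and $w$ is the braid word it determines, then every Artin generator $\sigma_i$ occurs in $w$ at most once, and, when it occurs, only with exponent $-1$. The ``at most once'' clause makes the crossing count sharp, and the ``exponent $-1$'' clause fixes its sign; together they yield the lemma immediately. This structural fact is the whole content of the proof.

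Granting it, here is the endgame. Let $S\subseteq\{1,\dots,m-1\}$ be the set of indices $i$ for which $\sigma_i$ occurs in $w$. Then the number $n$ of arrows of $G_m$ equals $|S|$, and $w(G_m)=-n$, since each occurring generator appears exactly once and with exponent $-1$. The underlying permutation $\pi$ of the braid is the product, in some order, of the distinct transpositions $(i,i+1)$ with $i\in S$. Consider the graph on vertex set $\{1,\dots,m\}$ having an edge $\{i,i+1\}$ for each $i\in S$: it is a subgraph of the path graph on $\{1,\dots,m\}$ with no repeated edge, hence a forest, and it has $m-|S|=m-n$ connected components. By the classical fact that a product of the edge transpositions of a forest, each used exactly once and in any order, has exactly one cycle for every tree of the forest, the permutation $\pi$ has $m-n$ cycles. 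Since the number of cycles of $\pi$ equals the number $r$ of components of the closed braid, we obtain $r=m-n=m+w(G_m)$, which is the assertion. One could instead organize this as an induction on the number of circles, peeling off the last circle $C$: by the clause of the definition saying that an arrow joining two distinct circles points to the circle of smaller index, $C$ lies over every other component, so its strands may be lifted to the top; the crossings between $C$ and the rest contribute $2\operatorname{lk}(C,L\setminus C)=0$ to the writhe because $L=O_r$, so $w(G_m)$ splits as the sum of the writhes of the two smaller (still totally ascending) pieces, and the identity propagates. But that route still needs the structural fact for a one-circle diagram, so I would prove it first in any case.

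The main obstacle is therefore the structural fact, which I would prove by unwinding the definition of ``totally ascending'' on the closed-braid diagram. For each crossing one determines which of its two strand-pieces is traversed earlier in the walk defining the notion, and uses that the earlier one always bears the arrowhead, i.e.\ passes under. Tracking the rightmost strand and the topmost crossings, one shows that a repeated occurrence of some $\sigma_i$, or a single occurrence with exponent $+1$, would force some crossing to be met over-first --- after first tidying $w$, if necessary, by the moves $\Omega_2$ and $\Omega_3$, which alter neither $r$, $m$, $w(G_m)$ nor the totally ascending property --- contradicting the hypothesis. Carrying out this bookkeeping of the traversal carefully is the crux; everything after it is routine.
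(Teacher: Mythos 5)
The paper offers no written argument here (the lemma is declared elementary and left to the reader, with a hint to induct on $m$), so I can only judge your proposal on its own terms, and it has a genuine gap precisely at what you yourself call the crux. The structural fact as you first state it is false: one of the two words $\sigma_1\sigma_1^{-1}$, $\sigma_1^{-1}\sigma_1$ in $B_2$ (which one depends on the over/under convention for $\sigma_1$) has both underpasses on the circle carrying the arc $a_1$, hence gives a totally ascending braid Gauss diagram in which $\sigma_1$ occurs twice, once with exponent $+1$; the lemma of course still holds there, since $r=2=m+0$. You anticipate this by allowing a preliminary tidying by $\Omega_2$ and $\Omega_3$, but that converts the structural fact into a normal-form theorem --- every totally ascending braid Gauss diagram can be carried, through moves preserving total ascendingness, to one in which each generator occurs at most once and only negatively --- which is at least as strong as the lemma itself and is nowhere established. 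You only gesture at it (tracking the rightmost strand and the topmost crossings), and you additionally assert without verification that $\Omega_3$ preserves total ascendingness, which is not obvious: the move permutes the cyclic order of the six endpoints of the three arrows involved and can a priori change which endpoint of an arrow is met first along the walk.

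Everything downstream of the structural fact is correct and cleanly organized: the transposition graph of a square-free negative word is a forest with $m-|S|$ components, the classical fact about products of the edge transpositions of a tree gives that the underlying permutation has $m-|S|$ cycles, and that cycle count is $r$; likewise, the observation that an arrow joining two distinct circles must have its head on the smaller-indexed circle, together with the vanishing of linking numbers for an unlink, correctly reduces the lemma to the one-circle case in your alternative sketch. But since both of your routes funnel into the unproved structural claim, the proposal does not yet constitute a proof; you would need either to prove the normal-form statement or to find a direct induction (for instance on the number of arrows, or on $m$ as the paper suggests) that bypasses it.
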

\begin{proof}
The proof of this lemma may be obtained by induction on $m$. It is elementary and is left to the reader.
\end{proof}

\begin{cor}\label{cor:tot-ascending}
Let $G_m$ be a totally ascending braid Gauss diagram of an $r$-component unlink $O_r$. Then for every $k\geq 0$ we have
$$P_{k+1}(G_m)=\P_k(O_r).$$
\end{cor}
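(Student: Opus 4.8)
The plan is to reduce the corollary to a direct computation, using the fact (from the Remark) that a totally ascending braid Gauss diagram $G_m$ on $r$ circles represents the $r$-component unlink $O_r$. First I would observe that for such a $G_m$ the only descending states that contribute come from the "minimal" pieces of data: because $G_m$ is totally ascending, for every $j$ with $1\le j\le k$ and every $G\in\mathfrak{G}_{j,m}$, the pairing $\langle A,G\rangle$ for $A\in\D_{n+j-k,j}$ vanishes unless $A$ has no arrows at all. Indeed, the totally ascending condition says that walking along the $i$-th circle from $a_{m_i}$ one meets every relevant arrow first at its head; an arrow in the image of a homomorphism from a descending diagram must be traversed first in the direction of its core, and one checks these two requirements are incompatible for any nonempty arrow (this is the arrow-diagram analogue of "an ascending diagram has trivial Conway polynomial"). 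Hence $D_{n,k,j}(G_m)=0$ for $n+j-k>0$, and $D_{0,k,j}(G_m)$ counts only the empty diagram over each $G\in\mathfrak{G}_{j,m}$.

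Next I would count the empty-diagram contribution. For $n+j-k=0$, i.e. $n=k-j$, the set $\D_{0,j}$ consists of a single diagram: $j$ disjoint based circles with no arrows, whose surface has $j$ boundary components. A homomorphism from this diagram into $G\in\mathfrak{G}_{j,m}$ is just a choice of colored braid Gauss diagram refining $G_m$ with $j$ base points consistently placed, and the sign is $+1$ since no arrows are involved. So $D_{k-j,k,j}(G_m)=|\mathfrak{G}_{j,m}|$, the number of colored braid Gauss diagrams on $m$ arcs with $j$ base points placed according to Definition~\ref{defn:labeling}. A straightforward combinatorial count — choosing which arcs carry the base points in ascending order with $a_1$ forced, then assigning each non-based arc one of the allowed colors — gives that this number is exactly the coefficient one needs; in fact I expect $\sum_j |\mathfrak{G}_{j,m}| x^j$ to be a shifted binomial-type generating function, and after multiplying by $f_j^{(k+1)}(G_m)$ and $z^{k+1-j}$ the sum over $j$ telescopes into a derivative of $a^{-m-w(G_m)+1}(a^2-1)^{j-1}$-type expressions.

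Then I would plug in Lemma~\ref{lem:tot-asc-equality}: since $r=m+w(G_m)$, we have $f_j^{(k+1)}(G_m)=\bigl((a^2-1)^{j-1}a^{1-r}\cdot a^{r-m-w(G_m)}\bigr)^{(k)}|_{a=1}=\bigl(a^{1-r}(a^2-1)^{j-1}\bigr)^{(k)}|_{a=1}$, so the exponent of $a$ no longer depends on $m$ but only on $r$. Assembling, $P_{k+1}(G_m)=\sum_{j=1}^{k+1} f_j^{(k+1)}(G_m)\,z^{k+1-j}\,D_{k+1-j,k+1,j}(G_m)$ becomes a universal expression in $r$ and $z$, which I would recognize as $z^k\frac{d^k}{da^k}\bigl[\bigl(\frac{a-a^{-1}}{z}\bigr)^{r-1}\bigr]_{a=1}=z^k P_a^{(k)}(O_r)|_{a=1}=\P_k(O_r)$, using the normalization $P(O_r)=\bigl(\frac{a-a^{-1}}{z}\bigr)^{r-1}$ stated in the introduction. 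This identification is essentially a generating-function manipulation: expanding $(a-a^{-1})^{r-1}=a^{1-r}(a^2-1)^{r-1}$ and comparing the $z$-expansion of both sides term by term.

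The main obstacle will be the first step — proving rigorously that totally ascending forces all nonempty descending pairings to vanish. One has to be careful about how the base points and the coloring interact: the descending condition is phrased relative to the base points $*_1,\dots,*_k$ of the arrow diagram, while the totally ascending condition is phrased relative to the canonical arcs $a_{m_i}$ of $G_m$, and one must check these match up under any admissible homomorphism $\phi$ (including the compatibility constraint on non-based arcs in Definition~\ref{defn:MultiArrow-BraidGauss}). The cleanest route is probably to argue that an $S$-admissible $\phi$ with $S\neq\emptyset$ would produce, via the smoothing construction of Section~3, a separating state whose existence contradicts the arrowhead-first property of $G_m$ on the circle carrying the relevant base point; equivalently, invoke Corollary~\ref{cor:sep-states} together with the fact that each $L^t_S$ is again (a sublink of) an unlink arising from an ascending diagram, so all higher Conway coefficients $c_{i_t}(L^t_S)$ vanish. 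Everything after that is bookkeeping with derivatives at $a=1$.
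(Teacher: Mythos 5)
Your overall skeleton matches the paper's: show that the totally ascending condition kills all states with arrows, compute the contribution of the empty arrow diagram, and use Lemma \ref{lem:tot-asc-equality} to identify $f_r^{(k+1)}(G_m)$ with $\left(a^{1-r}(a^2-1)^{r-1}\right)^{(k)}|_{a=1}$, which is exactly the coefficient in $\P_k(O_r)=z^{k+1-r}\left(a^{1-r}(a^2-1)^{r-1}\right)^{(k)}|_{a=1}$. However, there is a concrete error in your middle step. You claim $D_{k-j,k,j}(G_m)=|\mathfrak{G}_{j,m}|$ for every $j$. This is wrong for two reasons. First, the unique zero-arrow diagram in $\D_{0,j}$ has $j$ circles, and the pairing $\AG$ is declared to be zero whenever the numbers of circles of $A$ and $G$ differ; since every $G\in\mathfrak{G}_{j,m}$ has $r$ circles, only $j=r$ can contribute, whereas your formula produces spurious nonzero terms for every $j\leq\min(k,m)$. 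Second, even for $j=r$ the count is not $|\mathfrak{G}_{r,m}|$ but $1$: Definition \ref{defn:MultiArrow-BraidGauss} requires every non-based arc of $G$ labeled by $t$ to be the image of an arc in the $t$-th boundary component of $A$, and for the zero-arrow diagram this forces every non-based arc on the $i$-th circle of $G_m$ to carry the label $i$ and forces $*_i$ onto the arc $a_{m_i}$; exactly one colored diagram in $\mathfrak{G}_{r,m}$ satisfies this. (For example, for the closure of $\sig_2^{-1}\in B_3$ one has $|\mathfrak{G}_{2,3}|=3$ but only one of the three colorings admits a homomorphism from the two-circle empty diagram.) The paper's proof records precisely this: $D_{n,k+1,j}(G_m)=1$ if $j=r$ and $n=k+1-r$, and $0$ otherwise, so that $P_{k+1}(G_m)$ collapses to the single term $f_r^{(k+1)}(G_m)z^{k+1-r}$ with no telescoping over $j$. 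Your proposed generating-function identity $\sum_j f_j^{(k+1)}(G_m)z^{k+1-j}|\mathfrak{G}_{j,m}|=\P_k(O_r)$ is false in general, so the final assembly does not go through as written.

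One point in your favor: you correctly flag that the vanishing of all pairings involving at least one arrow needs an argument (via the separating-state machinery and the vanishing of higher Conway coefficients of ascending sublinks), whereas the paper simply asserts it with the phrase ``the diagram $G_m$ is totally ascending, hence...''. That part of your plan is sound and is more careful than the published proof; the defect is entirely in the zero-arrow count.
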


\begin{proof}
By definition
\begin{equation*}
\P_k(O_r)=\left\{\begin{array}{c}
\begin{aligned}
&\left(a^{-r+1}(a^2-1)^{r-1}\right)^{(k)}|_{a=1}\cdot z^{k-r+1}\hspace{3mm}\rm{if}\hspace{2mm} &1\leq r\leq k+1&\\
&\hspace{2.5cm}0  &\rm{otherwise}\thinspace.\\
\end{aligned}
\end{array}\right.
\end{equation*}
The diagram $G_m$ is totally ascending, hence
\begin{equation*}
D_{n,k+1,j}(G_m)=\left\{\begin{array}{c}
\begin{aligned}
&1\quad  \textrm{if}\quad j=r\hspace{3mm} \textrm{and}\hspace{3mm} &\hspace{2mm}n+r-k-1=0&\\
&0  &\rm{otherwise}\thinspace.\\
\end{aligned}
\end{array}\right.
\end{equation*}
It follows that
\begin{equation*}
P_{k+1}(G_m)=\left\{\begin{array}{c}
\begin{aligned}
&f_r^{(k+1)}(G_m)\cdot z^{k-r+1}\quad  \rm{if}\quad &1\leq r\leq k+1&\\
&\hspace{2.5cm}0  &\rm{otherwise}\thinspace.\\
\end{aligned}
\end{array}\right.
\end{equation*}
By definition $f_r^{(k+1)}(G_m):=\left(a^{-m-w(G_m)+1}(a^2-1)^{r-1}\right)^{(k)}|_{a=1}$. Lemma \ref{lem:tot-asc-equality} states that $-m-w(G_m)=-r$ and the proof follows.
\end{proof}

Now we are ready to prove our main theorem.

\begin{proof}[Proof of Theorem \ref{thm:main}]
We prove this theorem by induction on $k$.
If $k=0$, then by Remark \ref{rem:1-bd} $P_1(G_m)=\P_0(L):=\nabla(L)$.

Now let us assume that $P_k(G'_m)=\P_{k-1}(L')$ for any $G'_m$ which represents a link $L'$.
We have to show that for any $G_m$ which represents some link $L$ we have $P_{k+1}(G_m)=\P_k(L)$.
We prove this statement by induction on the number of arrows of the braid Gauss diagram $G_m$ of an $r$-component link~$L$.

If $G_m$ has no arrows, then it represents an $r$-component unlink, it is totally ascending and by Corollary \ref{cor:tot-ascending} we have $P_{k+1}(G_m)=\P_k(O_r)$.

Let us assume that $P_{k+1}(\widetilde{G}_m)=\P_k(\widetilde{L})$ for each link $\widetilde{L}$ and every braid Gauss diagrams $\widetilde{G}_m$ with less than $d$ arrows, which represent $\widetilde{L}$.

Let $G_m$ be a diagram with $d$ arrows. We can pick an arrow in $G_m$ and use the skein relation \eqref{eq:skein-G} to
simplify $G_m$. By induction hypothesis, each $P_i(G_{m,-})$, $P_k(G_{m,+})$ and $P_{k+1}(G_{m,0})$ in the right hand side of \eqref{eq:skein-G}
coincides with $\P_{i-1}(G_{m,-})$, $\P_{k-1}(L_+)$ and $\P_k(L_0)$ respectively.

We can make our braid Gauss diagram $G_m$ totally ascending by changing the appropriate arrows using the relation \eqref{eq:skein-G}. Hence we can represent $P_{k+1}(G_m)$ as $P_{k+1}(G'_m)$, for some totally ascending braid Gauss diagram $G'_m$, plus some terms of the
form $P_{k+1}(G_{m,0})$, where $G_{m,0}$ has less than $d$ arrows, and plus some terms of the
form $P_{i}(G''_m)$ for $1\leq i\leq k$, where $G''_m$ has $d$ arrows. The diagram $G'_m$ represents an $r$-component unlink $O_r$ and by Corollary \ref{cor:tot-ascending} we have $P_{k+1}(G'_m)=\P_k(O_r)$. By \eqref{eq:skein-P}, \eqref{eq:skein-G} and the induction hypothesis
$$P_{k+1}(G_m)=\P_k(L).$$
\end{proof}

\begin{ex}\rm
Let $G_2$ be a braid Gauss diagram of the trefoil knot $T$ shown in Figure \ref{fig:tref-Gauss} and let $G^2$ and $G^1$ be the unique colored braid Gauss diagrams in the sets $\mathfrak{G}_{2,2}$ and $\mathfrak{G}_{1,2}$ shown in Figures \ref{fig:braid-tref-states}a and \ref{fig:braid-tref-states}b respectively. We are going to calculate $P_3(G_2)$.\\
\begin{figure}[htb]
\centerline{\includegraphics[height=1.6in]{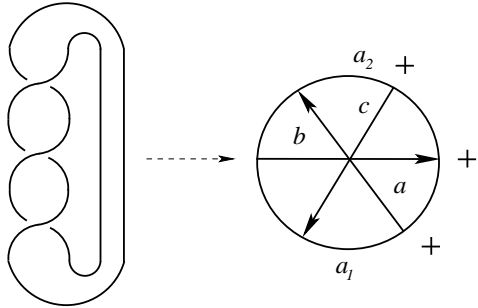}}
\caption{\label{fig:tref-Gauss} Trefoil $T$, braid Gauss diagram $G_2$ with labeled arrows.}
\end{figure}

\begin{figure}[htb]
\centerline{\includegraphics[height=1.4in]{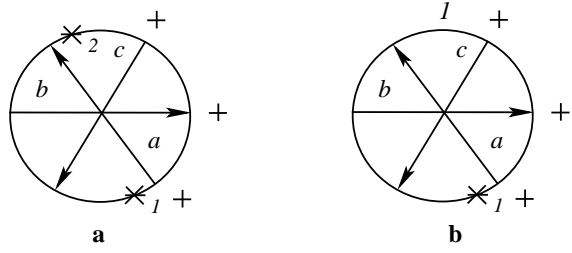}}
\caption{\label{fig:braid-tref-states} Colored braid Gauss diagrams $G^2$ and $G^1$.}
\end{figure}

Recall that $D_{n,3,j}(G_2):=\sum_{G\in \mathfrak{G}_{j,2}}\sum_{A\in \D_{n+j-3,j}}\AG$ for each $1\leq j\leq 3$, and
$D_{n,3}(G_2)=\sum_{j=1}^3 f_j^{(3)}(G_2)D_{n,3,j}(G_2)$. Note that $D_{n,3,j}(G_2)=0$ for $j>2$ because in this case the set $\mathfrak{G}_{j,2}$ is empty. In edition, we have
$$D_{n,3,1}(G_2)=\sum_{A\in \D_{n-2,1}}\langle A,G^1\rangle\quad \textrm{and} \quad D_{n,3,2}(G_2)=\sum_{A\in \D_{n-1,2}}\langle A,G^2\rangle.$$
There is a unique descending state of $G^1$ with $0$ arrows. The only other descending state of $G^1$ is $\{a,b\}$. It follows that
$D_{2,3,1}(G_2)=D_{4,3,1}(G_2)=1$ and $D_{n,3,1}(G_2)=0$ if $n\neq 2,4$. The only descending states of $G^2$ with $1$ arrow are $\{a\}$ and $\{c\}$, and the only descending state of $G^2$ with $3$ arrows is $\{a,b,c\}$. Hence $D_{2,3,2}(G_2)=2$, $D_{4,3,2}(G_2)=1$ and $D_{n,3,2}(G_2)=0$ if
$n\neq 2,4$. It follows that $D_{n,3}(G_2)=0$ if $n\neq 2,4$. In order to calculate $D_{2,3}(G_2)$ and $D_{4,3}(G_2)$ we need to compute $f_1^{(3)}(G_2)$ and $f_2^{(3)}(G_2)$. We have $w(G_2)=3$, hence
$$f_1^{(3)}(G_2):=(a^{-4})^{(2)}|_{a=1}=20\quad\textrm{and}\quad f_2^{(3)}(G_2):=(a^{-4}(a^2-1))^{(2)}|_{a=1}=-14.$$
It follows that $D_{2,3}(G_2)=\sum_{j=1}^3 f_j^{(3)}(G_2)D_{2,3,j}(G_2)=20\cdot 1-14\cdot 2=-8$ and
$D_{4,3}(G_2)=\sum_{j=1}^3 f_j^{(3)}(G_2)D_{4,3,j}(G_2)=20\cdot 1-14\cdot 1=6$.
Hence $P_3(G_2)=6z^4-8z^2$. Indeed, one may check that
$P(T)=a^{-2}z^2+2a^{-2}-a^{-4}$, so $\P_2(T):=z^2P^{(2)}_a(T)|_{a=1}=6z^4-8z^2$.

\end{ex}

\section{Final Remarks}

\textbf{1.} It is possible to show that for each $k\geq 0$ the polynomial $P_{k+1}(G_m)$ is invariant of an underlying link without using the HOMFLY-PT polynomial. Recall that by Markov's theorem two braids $\a$ and $\b$ represent the same link if and only if $\a$ can be obtained from $\b$ by a finite sequence of Markov moves.
\begin{itemize}
\item
Conjugation in the braid group, i.e. replace $\a\in B_m$ by $\g\a\g^{-1}$ where $\g\in B_m$.
\item
Stabilization move, i.e. replace $\a\in B_m$ by $\a\sig_n$ or by $\a\sig_n^{-1}$. Destabilization move, i.e. perform the converse operation.
\end{itemize}

In the language of braid Gauss diagrams it means that we have to prove the invariance of $P_{k+1}(G_m)$ under moves shown in Figure~\ref{fig:braid-Gauss-Rmoves}. The proof is very similar to the proof of main theorem in \cite{B}.

\textbf{2.} We have defined $P_{k+1}(G_m)$ in terms of counting descending arrow diagrams in a braid Gauss diagram $G_m$. Alternatively, we can define $P_{k+1}(G_m)$ in terms of counting ascending arrow diagrams in a braid Gauss diagram~$G_m$.

\begin{defn}\label{defn:asc-labeling}\rm
Let $k\geq 1$ be any integer and $G_m$ a braid Gauss diagram associated with a braid $\a\in B_m$. A \emph{$\star$-colored braid Gauss diagram} $G^\star_{k,m}$ is a diagram $G$ together with the following assignment of $k$ base points $*_1,...,*_k$ and $m-k$ natural numbers between $1$ and $k$ to the arcs labeled by letters $a_1,...,a_m$:
\begin{itemize}
\item
For each $1\leq i\leq k$ there exists \textbf{exactly one} arc $a_j$ such that the base point $*_i$ is placed on this arc, and the arc $a_m$ always contains basepoint $*_1$.
\item
Let $1\leq i_1<i_2\leq k$. If $*_{i_1}$ and $*_{i_2}$ are placed on arcs $a_{j_1}$ and $a_{j_2}$, then $j_1>j_2$, i.e. the assignment of base points is in descending order.
\item
After we placed base points $*_1,...,*_k$ on arcs $a_{j_1},...,a_{j_k}$, let $a_j$ be a non-based arc. Denote by $j_l$ the minimal number from the set $\{j_1,...,j_k\}$ such that $j<j_l$. Now, to arc $a_j$ we assign \textbf{exactly one} number from the set $\{1,...,l\}$.
\end{itemize}
\end{defn}

We denote by $\mathfrak{G}^\star_{k,m}$ a set of all $\star$-colored braid Gauss diagrams associated with $k$ and $G_m$.

\begin{defn}\rm
Let $A$ be a multi-based arrow diagram with $k$ boundary components. As we go
along the first boundary component of $\S(A)$ starting from the base point $*_1$, we pass
on the boundary of each ribbon once or twice. Then we continue to go along the second boundary component of $\S(A)$ starting from the base point $*_2$ and so on until we pass all boundary components of $\S(A)$. Arrow diagram $A$ is \textit{ascending}
if we pass each ribbon of $\S(A)$ first time in the opposite direction of its core arrow. The notion of state and pairing is defined as in Subsection \ref{subsec-G-diag}.
\end{defn}

\begin{ex}\rm
Arrow diagram with three boundary components shown below is ascending.
\begin{equation*}
\ris{-4}{-3}{60}{3-based-a-diagram}{-1.1}
\end{equation*}
\end{ex}

Denote by $\A_{n,k}$ the set of all ascending arrow diagrams with $n$ arrows and $k$ boundary components.
Let $G_m$ be any braid Gauss diagram. For a pair $k,j$ such that $1\leq j\leq k$ set
$$A_{n,k,j}(G_m):=\sum_{G\in \mathfrak{G}^\star_{j,m}}\sum_{A\in \A_{n+j-k,j}}\AG.$$
Denote
$$A_{n,k}(G_m):=\sum_{j=1}^k f_j^{(k)}(G_m)A_{n,k,j}(G_m)\hspace{1.4cm} P^\star_k(G_m):=\sum_{n=0}^\infty A_{n,k}(G_m) z^n.$$

\begin{thm}
Let $G_m$ be a braid Gauss diagram of a link $L$, then for $k\geq 0$
$$P^\star_{k+1}(G_m)=\P_{k}(L).$$
\end{thm}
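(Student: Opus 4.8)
The plan is to mirror the proof of Theorem~\ref{thm:main} almost verbatim, after establishing the combinatorial dictionary between the ascending and descending setups. First I would observe that the whole machinery of Section~3 — the skein relation of Theorem~\ref{thm:skein1}, Corollary~\ref{cor:skein-A-k-j}, and the reduction Lemma that produces \eqref{eq:skein-G} — depends only on formal properties of $D_{n,k,j}$, namely: (i) that $D_{n,k,j}(G_m)$ is expressed as a sum over $\mathfrak{G}_{j,m}$ of pairings with a class of arrow diagrams closed under the smoothing operation of Figure~\ref{fig:smoothing1}, (ii) that the one-boundary-component case recovers the Conway polynomial coefficients (Remark~\ref{rem:1-bd}), and (iii) the arithmetic identities for the coefficients $f_j^{(k)}$. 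So I would define $A^\star_{k,j}(G_m):=\sum_{n=0}^\infty A_{n,k,j}(G_m)z^{n+j-k}$, note the analogue of \eqref{eq:P-k}, namely $P^\star_k(G_m)=\sum_{j=1}^k f_j^{(k)}(G_m)z^{k-j}A^\star_{k,j}(G_m)$, and check that the exact arguments of Theorem~\ref{thm:skein1} and its corollary go through with ``descending'' replaced by ``ascending'' and $\mathfrak{G}_{j,m}$ by $\mathfrak{G}^\star_{j,m}$. The point is that Theorem~\ref{thm:1-comp-skein-relation} is symmetric under the ascending/descending swap, since for $k=1$ the notions of ascending and descending separating state of $G^i_S$ each recover $c_n(L^i_S)$ by the Chmutov–Khoury–Rossi theorem; more precisely one should record an ``ascending'' version of Remark~\ref{rem:1-bd}, which holds because replacing every arrow direction gives a bijection between descending and ascending states preserving signs and smoothings.

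Next I would carry out the induction exactly as in the proof of Theorem~\ref{thm:main}: a double induction, the outer one on $k$ with base case $P^\star_1(G_m)=\nabla(L)$ (the ascending analogue of Remark~\ref{rem:1-bd}), and the inner one on the number of arrows of $G_m$. The inner base case requires the $\star$-analogue of Corollary~\ref{cor:tot-ascending}. Here the relevant notion of ``totally ascending'' must be stated with respect to the $\star$-ordering of arcs (arc $a_m$ carries $*_1$, base points in descending order), so that a $G_m$ with no arrows is trivially $\star$-totally-ascending and represents an $r$-component unlink; Lemma~\ref{lem:tot-asc-equality} (which is purely about $r=m+w(G_m)$ and does not see orientations of arrows) applies verbatim. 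With these ingredients the skein relation \eqref{eq:skein-G} for $P^\star_{k+1}$, combined with \eqref{eq:skein-P} for $\P_k$ and both induction hypotheses, forces $P^\star_{k+1}(G_m)=\P_k(L)$, since the arrow-reduction argument lets us write $P^\star_{k+1}(G_m)$ as $P^\star_{k+1}$ of a $\star$-totally-ascending diagram plus correction terms of lower complexity, each of which is handled by induction.

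The step I expect to be the main obstacle is verifying that the ``ascending'' separating-state calculus really is the mirror of the ``descending'' one at the level of the colored Gauss diagrams — that is, confirming that Definition~\ref{defn:asc-labeling} of $\star$-colored diagrams is precisely the image of Definition~\ref{defn:labeling} under reversing the cyclic order of arcs, so that the $S$-admissibility condition, the labeling rules of Figure~\ref{fig:asc-des-arcs}, and the decomposition into $\{G^i_S\}$ all transport correctly. In particular one must check that an ascending arrow diagram's separating arrows, under a homomorphism into a $\star$-colored diagram, induce a consistent labeling and that the smoothed sublinks $L^i_S$ are the same as in the descending picture; this is where a careful figure-level argument (rather than a formula manipulation) is needed. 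Once that dictionary is in place, Lemma~\ref{lem:sep-states}, Corollary~\ref{cor:sep-states}, Theorem~\ref{thm:skein1} and the Lemma giving \eqref{eq:skein-G} all have literal $\star$-analogues with identical proofs, and the theorem follows. I would therefore structure the write-up as: (1) state the ascending analogues of Remark~\ref{rem:1-bd}, Lemma~\ref{lem:sep-states}, Corollary~\ref{cor:sep-states}, Theorem~\ref{thm:skein1}, Corollary~\ref{cor:skein-A-k-j}, and the skein Lemma, remarking in each case that the proof is obtained from the descending one by reversing all arrow directions; (2) state and prove the $\star$-analogue of Corollary~\ref{cor:tot-ascending}; (3) repeat the double induction of the proof of Theorem~\ref{thm:main}.
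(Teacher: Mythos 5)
Your proposal is correct and follows essentially the same route as the paper, which simply declares the proof of this theorem ``identical to the proof of our main theorem'' and leaves it to the reader; you have spelled out precisely the mirroring (ascending analogues of Remark~\ref{rem:1-bd}, the separating-state lemmas, the skein relations, and the double induction) that the author intends. The only caveat is that the ascending analogue of Remark~\ref{rem:1-bd} is better justified by citing that Chmutov--Khoury--Rossi establish both the ascending and descending counts for $c_n$ than by your ``reverse all arrows'' bijection, since reversing an arrow in a Gauss diagram changes the underlying crossing and hence the link.
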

The proof of this theorem is identical to the proof of our main theorem and is left to the reader.

\bibliographystyle{alpha}

\bigskip

\emph{Department of Mathematics, Vanderbilt University, Nashville, TN 37240}

\emph{E-mail address:} \verb"michael.brandenbursky@Vanderbilt.Edu"

\end{document}